\documentclass[english,a4paper, abstract = true]{scrartcl}

\usepackage{mypackages}
\usepackage{setspace}
\usetikzlibrary{matrix}
\usetikzlibrary{calc}
\usetikzlibrary{hobby}

\begin{document}
\title{Harmonic morphisms and minimal submanifolds}
\author{Oskar Riedler}

\maketitle
\begin{abstract}
Harmonic morphisms, maps which preserve Laplace's equation, are intimately connected to the topic of minimal submanifolds. In this article we first characterise harmonic morphisms between Riemannian manifolds as the weakly horizontally conformal maps that preserve the equation for minimal submanifolds of co-dimension $2$. We further derive additional reduction properties of harmonic morphisms for minimal submanifolds of other co-dimensions. These theorems are then applied in an example case, yielding a novel family of degree $4$ area-minimising hypercones in $\mtr^m$, $m\geq32$.
\end{abstract}

\section{Introduction}

A map $\varphi\colon(M,g)\to(N,h)$ between two Riemannian manifolds is called a \emph{harmonic morphism} if it preserves Laplace's equation, i.e. for any locally defined function germ $f:N\to\mtr$ one has
$$\Delta_N f=0\implies \Delta_M(f\circ\varphi)=0.$$
Here $\Delta_M,\Delta_N$ denote the Laplacians of $M, N$ respectively.
\medbreak

While the above definition is purely analytic and originates in potential theory \cite{const-corn-65}, the geometric theory of harmonic morphisms turns out to be very rich. The investigation of harmonic morphisms in Riemannian geometry begins with the work of Fuglede \cite{fuglede-78} and Ishihara \cite{ishihara-79}, who found a geometric characterisation of these objects. They showed that a map is a harmonic morphism if and only if it is both \emph{weakly horizontally conformal} and \emph{harmonic}.

The notion of a harmonic morphism has turned out to be quite fertile and has since been studied in many contexts. We mention the settings of Riemannian polyhedra \cite{eells-fuglede-01}, metric graphs \cite{urakawa-00,abbr-15}, Weyl spaces \cite{loubeau-pantilie-06,loubeau-pantilie-10}, and probability theory \cite{bcd-79,cfb-90}.
\medbreak

In this article we will be concerned with the connections between harmonic morphisms and \emph{minimal submanifolds}. In particular we wish to use harmonic morphisms to generate such submanifolds. In the case that the co-domain $N$ is a surface a celebrated theorem of Baird and Eells \cite{baird-eells-80} shows that $\varphi$ fibres the regular points $M\setminus\{ d\varphi = 0\}$ by minimal co-dimension $2$ submanifolds:

\begin{theorem}[\cite{baird-eells-80}]
Let $\varphi\colon(M,g)\to(N,h)$ be a submersive horizontally conformal map with $\dim(N)=2$. The following are equivalent:
\begin{enumerate}[label=(\roman*)]
\item $\varphi$ is harmonic and thus a harmonic morphism.
\item $\varphi^{-1}(\{p\})$ is a minimal co-dimension $2$ submanifold of $M$ for all $p\in N$.
\end{enumerate}
\end{theorem}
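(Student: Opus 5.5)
The plan is to compute the tension field $\tau(\varphi)$ of a horizontally conformal submersion and split it into a vertical-geometry part and a dilation part. Since $\dim N=2$, the dilation part vanishes, and what remains is exactly the mean curvature of the fibres. This is the classical fundamental equation, and I would derive it in the following steps.

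\textbf{Setup.} Write $TM=\mathcal H\oplus\mathcal V$ with $\mathcal V=\ker d\varphi$. Horizontal conformality means $h(d\varphi X,d\varphi Y)=\lambda^2 g(X,Y)$ for $X,Y\in\mathcal H$, with dilation $\lambda>0$ since $\varphi$ is submersive. Let $n=\dim N$ and $m=\dim M$. Work locally with an orthonormal frame $X_1,\dots,X_n$ of $\mathcal H$ and $U_1,\dots,U_{m-n}$ of $\mathcal V$. The tension field is
\[\tau(\varphi)=\sum_i \nabla d\varphi(X_i,X_i)+\sum_r \nabla d\varphi(U_r,U_r).\]

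\textbf{Vertical terms.} Since $d\varphi(U_r)=0$, we get $\nabla d\varphi(U_r,U_r)=-d\varphi(\nabla^M_{U_r}U_r)$. Only the horizontal component survives, so the vertical sum equals $-(m-n)\,d\varphi(\mu^{\mathcal V})$, where $\mu^{\mathcal V}$ is the mean curvature vector of the fibres.

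\textbf{Horizontal terms.} Locally I may take the $X_i$ to be basic, i.e. $X_i=\lambda^{-1}\widetilde{E_i}$ with $E_i$ orthonormal on $N$, or else use a direct Koszul-type computation. The standard outcome is
\[\sum_i\nabla d\varphi(X_i,X_i)=-(n-2)\,d\varphi(\operatorname{grad}_{\mathcal H}\ln\lambda).\]
To obtain this, differentiate the conformality relation $h(d\varphi X_i,d\varphi X_j)=\lambda^2\delta_{ij}$. This shows that the symmetric form $\nabla d\varphi$ restricted to $\mathcal H\times\mathcal H$ equals
\[X(\ln\lambda)\,d\varphi Y+Y(\ln\lambda)\,d\varphi X-g(X,Y)\,d\varphi(\operatorname{grad}_{\mathcal H}\ln\lambda).\]
Taking the trace gives $2\,d\varphi(\operatorname{grad}_{\mathcal H}\ln\lambda)-n\,d\varphi(\operatorname{grad}_{\mathcal H}\ln\lambda)$. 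I expect this to be the main obstacle. The cleanest route is to pull back to a statement about the second fundamental form of a conformal linear map. Concretely, I would check that $g(\nabla d\varphi(X,Y),d\varphi Z)$, which is expressible via $\mathcal L_{\widetilde{}}$-derivatives of $\lambda^2 g|_{\mathcal H}$, satisfies the same Koszul-type identity as for a conformal change of metric $\lambda^2 g$.

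\textbf{Conclusion.} Combining the two computations gives the fundamental equation
\[\tau(\varphi)=-(n-2)\,d\varphi(\operatorname{grad}_{\mathcal H}\ln\lambda)-(m-n)\,d\varphi(\mu^{\mathcal V}).\]
For $n=2$ the first term vanishes. Because $d\varphi$ is injective on $\mathcal H$, it follows that $\tau(\varphi)=0$ if and only if $\mu^{\mathcal V}=0$, i.e. every fibre $\varphi^{-1}(\{p\})$ is minimal. (When $m=n$ the fibres are points and both conditions hold trivially.) Finally, by the Fuglede--Ishihara characterisation, harmonic plus horizontally conformal is equivalent to being a harmonic morphism, which gives the ``and thus'' in (i).
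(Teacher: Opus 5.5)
Your proposal is correct. The paper only cites this theorem, but your argument is exactly the case $B=\{p\}$ of Lemma~\ref{lemma: taup-B} (co-dimension $n=2$, $H_B=0$): the vertical trace gives $d\varphi$ of the fibre mean curvature, and the horizontal trace gives $-(n-2)\,d\varphi(\nabla\ln\lambda)$ via the Koszul-type identity of Proposition~\ref{prop: whc-connection}, which is equivalent to your formula for $\nabla d\varphi$ on $\mathcal H\times\mathcal H$ obtained by differentiating $h(d\varphi X,d\varphi Y)=\lambda^2g(X,Y)$. One harmless slip: the orthonormal horizontal lifts are $\lambda\widetilde{E_i}$, not $\lambda^{-1}\widetilde{E_i}$, since $g(\widetilde{E_i},\widetilde{E_i})=\lambda^{-2}$.
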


It is natural to ask if a similar characterisation using minimal submanifolds is possible also for higher dimensional co-domains. In the following we characterise harmonic morphisms as those weakly horizontally maps preserving the equation for minimality of co-dimension $2$ submanifolds at its regular locus.

\begin{theorem}\label{thm: co-dim-2}
Let $\varphi\colon(M,g)\to(N,h)$ be a submersive horizontally conformal map. The following are equivalent:
\begin{enumerate}[label=(\roman*)]
\item $\varphi$ is harmonic and thus a harmonic morphism.
\item For all minimal co-dimension $2$ submanifolds $B\subset N$ the pre-image $\varphi^{-1}(B)\subset M$ is a minimal co-dimension $2$ submanifold.
\end{enumerate}
\end{theorem}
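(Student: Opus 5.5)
We compare the mean curvature of $\tilde B=\varphi^{-1}(B)$ in $M$ with that of $B$ in $N$, using the standard splitting of the tension field of a horizontally conformal submersion. Write $\lambda$ for the dilation, so that $\varphi^*h=\lambda^2 g$ on the horizontal distribution $\mathcal H$, and $\mathcal V=\ker d\varphi$. Recall the fundamental equation (Baird--Eells)
$$\tau(\varphi)=-(m-n)\,d\varphi(\mu^{\mathcal V})-(n-2)\,d\varphi\bigl(\operatorname{grad}_{\mathcal H}\ln\lambda\bigr),$$
where $\mu^{\mathcal V}$ is the mean curvature vector of the fibres, normalised by $\frac1{m-n}\operatorname{tr}$. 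Let $\tilde B=\varphi^{-1}(B)$ with $\operatorname{codim} B=2$. Then $T\tilde B=\mathcal V\oplus\mathcal H_B$, where $\mathcal H_B$ is the horizontal lift of $TB$, and the normal bundle $\nu\tilde B$ is the horizontal lift of $\nu B$. For a normal vector $\xi\in\nu B$ with horizontal lift $\tilde\xi$, I compute $\langle \tilde H,\tilde\xi\rangle$, where $\tilde H$ is the (unnormalised) mean curvature of $\tilde B$, by summing second fundamental form terms over an orthonormal frame adapted to $\mathcal V\oplus\mathcal H_B$.

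\textbf{Vertical part.} This contributes $\sum_j\langle\nabla_{V_j}V_j,\tilde\xi\rangle=(m-n)\langle\mu^{\mathcal V},\tilde\xi\rangle$.

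\textbf{Horizontal part.} For an orthonormal frame $X_1,\dots,X_{n-2}$ of $TB$, the lifts $\lambda^{-1}\tilde X_i$ are orthonormal. The O'Neill/conformal formula for a horizontally conformal submersion gives
$$\mathcal H\nabla^M_{\tilde X}\tilde Y=\widetilde{\nabla^N_XY}+\lambda^2\Bigl(-\tilde X(\ln\lambda^{-1})\,\tilde Y-\dots\Bigr),$$
i.e.\ the usual formula for a conformal change by the factor $\lambda^{-2}$, restricted to $\mathcal H$. Summing and pairing with $\tilde\xi$, the terms in $\tilde X$, $\tilde Y$ drop out because $\tilde\xi\perp\mathcal H_B$. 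What remains is
$$\langle\cdot,\tilde\xi\rangle \;\propto\; \langle H_B,\xi\rangle+(n-2)\,\tilde\xi(\ln\lambda).$$
Collecting terms, I expect
$$\lambda^{2}\langle\tilde H,\tilde\xi\rangle=\langle H_B,\xi\rangle\cdot c+\bigl\langle (m-n)\mu^{\mathcal V}+(n-2)\operatorname{grad}_{\mathcal H}\ln\lambda,\ \tilde\xi\bigr\rangle
=c\,\langle H_B,\xi\rangle-\langle\tau(\varphi),\xi\rangle,$$
up to the sign convention and a positive factor $c$. This identity is the key step, and getting the coefficients right is the main obstacle: the dilation gradient terms must assemble exactly with the factor $n-2=\dim B$. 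That they do is the reason codimension $2$ is singled out, since for other codimensions the coefficient of $\operatorname{grad}\ln\lambda$ would not match the one in $\tau(\varphi)$. Hence, if $B$ is minimal, $\tilde B$ is minimal if and only if $\tau(\varphi)_p\perp T_pB$ fails to have a component along $\nu_pB$, i.e.\ if and only if $\tau(\varphi)$ is tangent to $B$ along $\varphi^{-1}(B)$.

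\textbf{(i)$\Rightarrow$(ii).} If $\varphi$ is harmonic, then $\tau(\varphi)=0$, so minimality of $B$ gives minimality of $\tilde B$.

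\textbf{(ii)$\Rightarrow$(i).} Fix $x\in M$, $p=\varphi(x)$, and any $2$-plane $P\subset T_pN$. By local existence of minimal submanifolds with prescribed tangent plane (for instance totally geodesic at $p$ to first order, or a small solution of the minimal surface system with given $1$-jet, obtained by a graph over $\exp_p(P^\perp)$ via an implicit function argument), there is a minimal codimension-$2$ submanifold $B$ through $p$ with $\nu_pB=P$. Then $\tau(\varphi)_x\perp P$. Since $P$ is arbitrary and $n\ge3$ (for $n=2$ this is exactly the Baird--Eells theorem), it follows that $\tau(\varphi)_x=0$. The existence of such local minimal $B$ with arbitrary prescribed normal plane is the second technical point; I would cite the standard local existence result for the minimal submanifold equation.
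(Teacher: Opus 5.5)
Your proof is correct and is essentially the paper's. Taking $H$ to be the trace of the second fundamental form, you obtain, for $\xi\perp T_{\varphi(x)}B$, the identity $h(\tau(\varphi),\xi)=\lambda^2h(H_B,\xi)-h\bigl(d\varphi(H_{\varphi^{-1}(B)}),\xi\bigr)$, which is the $p=2$ case of Lemma~\ref{lemma: taup-B} up to the paper's sign convention for $H$. You get it by splitting a trace over a frame adapted to $\ker d\varphi\oplus\widehat{TB}\oplus\widehat{\nu B}$ and using the conformal Koszul formula, and then deduce (ii)$\Rightarrow$(i) from local existence of minimal submanifolds with prescribed tangent space, exactly as the paper does.

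The only difference is bookkeeping. The paper applies the Koszul formula to the normal directions inside $\tau$, which is what yields its codimension-$p$ version. You apply it to the tangent directions of $B$ and quote the Baird--Eells formula for $\tau$.

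There are some minor slips, none of which affect the argument:
\begin{itemize}
\item The orthonormal lifts are $\lambda\widetilde X_i$, not $\lambda^{-1}\widetilde X_i$.
\item The horizontal formula should read $\mathcal H\nabla^M_{\widetilde X}\widetilde Y=\widetilde{\nabla^N_XY}-\widetilde X(\ln\lambda)\,\widetilde Y-\widetilde Y(\ln\lambda)\,\widetilde X+g(\widetilde X,\widetilde Y)\,\mathrm{grad}_{\mathcal H}\ln\lambda$.
\item Your constant is $c=\lambda^2$.
\end{itemize}
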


Harmonic morphisms are then closely tied to the geometry of co-dimension $2$ submanifolds in the co-domain. We are also interested in submanifolds of other dimensions, in particular in hypersurfaces, i.e. co-dimension $1$ submanifolds. In this direction Baird and Gudmundsson \cite{baird-gudmundsson-92} derive results for so-called $p$-harmonic morphisms. Specialising to the case of a harmonic morphism, their statement treats again maps to a surface.

\begin{theorem}[cf. Corollary 3.4 of \cite{baird-gudmundsson-92}]
Let $\varphi\colon(M,g)\to(N,h)$ be a submersive harmonic morphism with $\dim(N)=2$ and let $B\subset N$ be a minimal submanifold of co-dimension $1$ (i.e.\ a geodesic). The following are equivalent:
\begin{enumerate}[label=(\roman*)]
\item $d\varphi(\nabla\|d\varphi\|^2(x)\,)$ is tangent to $B$ for all $x\in\varphi^{-1}(B)$.
\item $\varphi^{-1}(B)$ is a minimal co-dimension $1$ submanifold of $M$.
\end{enumerate}
\end{theorem}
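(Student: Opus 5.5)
Throughout, $\lambda$ denotes the dilation of $\varphi$, so that $\|d\varphi\|^2=2\lambda^2$ because $\dim N=2$; hence condition (i) is equivalent to $d\varphi(\nabla\lambda)$ being tangent to $B$. The plan is to compute the mean curvature of the hypersurface $P=\varphi^{-1}(B)$ explicitly, and to show that it is a nonzero multiple of $h(d\varphi(\nabla\lambda),\nu)$, where $\nu$ is a unit normal to $B$. Since $\varphi$ is a submersion, $P$ is a smooth hypersurface.

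\emph{Set-up.} Near a point of $B$, extend $\nu$ to a unit vector field on an open set of $N$, namely the unit normal field to the family of geodesics parallel to $B$. Let $\nu^H$ be its horizontal lift, so that $|\nu^H|=\lambda^{-1}$. Then $n=\lambda\nu^H$ is a unit vector field which, along $P$, is normal to $P$: indeed $T_xP$ consists of the vertical vectors together with the horizontal lifts of $TB$. Because $n$ is a unit field, $\langle\nabla_n n,n\rangle=0$, and so the mean curvature of $P$ equals $-\operatorname{div}_M n$ along $P$, up to normalisation. It therefore suffices to compute $\operatorname{div}_M(\lambda\nu^H)$.

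\emph{Splitting the divergence.} Split the trace into a vertical and a horizontal part.
\begin{itemize}
\item[$\cdot$] For a vertical orthonormal frame $U_j$ we have $\sum_j\langle\nabla_{U_j}\lambda\nu^H,U_j\rangle=-\lambda\langle\nu^H,\sum_j\nabla_{U_j}U_j\rangle$. This is a multiple of $\langle\nu^H,\mu^{\mathcal V}\rangle$, where $\mu^{\mathcal V}$ is the mean curvature of the fibres. By the theorem of Baird and Eells the fibres are minimal, so this term vanishes.
\item[$\cdot$] For the horizontal part, I claim that for basic vector fields $X,Y,Z$ the horizontal component $\mathcal H\nabla_X Y$ is the horizontal lift of $\tilde\nabla_{X}Y$. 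Here $\tilde\nabla$ is the Levi-Civita connection of the metric $\lambda^{-2}h$, viewed locally on $N$.
\end{itemize}
I would prove the claim by applying the Koszul formula to $g(\nabla_XY,Z)$. The inner products of basic fields are $\lambda^{-2}\,h(\cdot,\cdot)\circ\varphi$, and the horizontal parts of brackets of basic fields are basic, so the resulting expression coincides with the Koszul formula for $\lambda^{-2}h$. One subtlety is that $\lambda$ need not be constant on fibres. To handle it, I would keep track of the vertical derivatives of $\lambda$ in the Koszul formula and check that they only contribute vertical components, not horizontal ones. I expect this step to be the main obstacle.

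\emph{Computing the horizontal trace.} Granting the claim, the horizontal trace is $\operatorname{div}_{\lambda^{-2}h}\nu$. For a conformal change $e^{2u}h$ in dimension $2$ one has $\operatorname{div}_{e^{2u}h}Y=\operatorname{div}_hY+2\,Y(u)$. With $u=-\ln\lambda$ this gives
\[
\operatorname{div}^{\mathcal H}\nu^H=(\operatorname{div}_N\nu)\circ\varphi-2\,\nu^H(\ln\lambda).
\]
Hence
\[
\operatorname{div}_M(\lambda\nu^H)=\nu^H(\lambda)+\lambda\big((\operatorname{div}_N\nu)\circ\varphi-2\lambda^{-1}\nu^H(\lambda)\big)=\lambda\,(\operatorname{div}_N\nu)\circ\varphi-\nu^H(\lambda).
\]

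\emph{Conclusion.} On $B$, $\operatorname{div}_N\nu$ is (minus) the geodesic curvature of $B$, which vanishes because $B$ is a geodesic. So along $P$ the mean curvature of $P$ is a nonzero multiple of $\nu^H(\lambda)=g(\nabla\lambda,\nu^H)$. By horizontal conformality,
\[
h(d\varphi(\nabla\lambda),\nu)=\lambda^2\,g(\nabla\lambda,\nu^H).
\]
Therefore $P$ is minimal if and only if $d\varphi(\nabla\lambda)\perp\nu$ at every point of $P$, that is, if and only if $d\varphi(\nabla\lambda)$ is tangent to $B$. Since $\nabla\|d\varphi\|^2=4\lambda\nabla\lambda$, this is exactly condition (i), which proves the equivalence.
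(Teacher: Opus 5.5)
Your argument is correct, but it is organised differently from the paper, which recovers this statement as the case $p=1$, $\dim N=2$ of \cref{thm: co-dim-p}.

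\textbf{The paper's route.} \cref{lemma: taup-B} computes the component of $\tau(\varphi)$ normal to $B$ in a frame adapted to $\varphi^{-1}(B)$. The vertical and $TB$-directions are absorbed directly into $H_{\varphi^{-1}(B)}$ and $H_B$. The Koszul-type formula of \cref{prop: whc-connection} is applied only to the $p$ normal directions, which is where the coefficient $p-2$ comes from. This gives an identity for arbitrary co-dimension and arbitrary $\dim N$, with no harmonicity assumption (\cref{thm: reduc-infi}). Harmonicity enters only at the very end, through $\tau(\varphi)=0$.

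\textbf{Your route.} You compute the mean curvature of the hypersurface directly as $-\mathrm{div}_M(\lambda\nu^H)$. You use harmonicity midway, in its geometric form (minimal fibres), to kill the vertical trace. The horizontal trace is handled by the same Koszul formula plus the conformal change of divergence. Your identity $\mathrm{div}_M(\lambda\nu^H)=\lambda\,\mathrm{div}_N\nu-\nu^H(\lambda)$ is exactly the $p=1$ case of equation~(\ref{eq: taup-B}) with $\tau(\varphi)=0$. In your bookkeeping the coefficient $p-2=-1$ appears as $+1$ (from differentiating the normalising factor $\lambda$) minus $2$ (from the conformal divergence in dimension $2$).

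Your route is more hands-on, but it is tied to hypersurfaces and, as written, to $\dim N=2$. It does extend to $\dim N=n$ via the fundamental equation $(m-n)\,\mu^{\mathcal V}=-(n-2)\,\mathcal H\nabla\ln\lambda$. Its vertical contribution $+(n-2)\,\nu^H(\ln\lambda)$ cancels the extra $-(n-2)\,\nu^H(\ln\lambda)$ from the conformal divergence.

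\textbf{Two minor corrections.} First, the step you expect to be the main obstacle is not one. In the Koszul formula for $g(\nabla^M_{\widehat X}\widehat Y,\widehat Z)$ with all three fields basic, $\lambda$ is only ever differentiated along $\widehat X,\widehat Y,\widehat Z$. So vertical derivatives of $\lambda$ never appear, and one gets pointwise
\[
\mathcal H\nabla^M_{\widehat X}\widehat Y=\widehat{\nabla^N_XY}-\widehat X(\ln\lambda)\,\widehat Y-\widehat Y(\ln\lambda)\,\widehat X+g(\widehat X,\widehat Y)\,\mathcal H\nabla\ln\lambda,
\]
which is \cref{prop: whc-connection}. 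You should state your claim in this form, rather than via the Levi-Civita connection of a metric $\lambda^{-2}h$ on $N$; that metric is not defined when $\lambda$ varies along fibres. Your divergence formula is then justified, because it only uses the first-order horizontal behaviour of $\lambda$ at the point.

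Second, curves equidistant from a geodesic are in general not geodesics. You only need some unit extension of $\nu$, e.g.\ the gradient of the signed distance to $B$.
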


We extend this to arbitrary co-domains as follows:

\begin{theorem}\label{thm: co-dim-p}
Let $\varphi\colon(M,g)\to(N,h)$ be a submersive harmonic morphism and let $B\subset N$ be a minimal submanifold of co-dimension $p$ with $p\neq2$. The following are equivalent:
\begin{enumerate}[label=(\roman*)]
\item $d\varphi(\nabla\|d\varphi\|^2(x)\,)$ is tangent to $B$ for all $x\in\varphi^{-1}(B)$.
\item $\varphi^{-1}(B)$ is a minimal co-dimension $p$ submanifold of $M$.\end{enumerate}
\end{theorem}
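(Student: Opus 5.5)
The plan is to compute the mean curvature of $K:=\varphi^{-1}(B)$ directly and to express it in terms of the mean curvature of $B$ and the gradient of the dilation. Write $n=\dim N$, $m=\dim M$ and $k=\dim B=n-p$. Let $\lambda$ be the dilation of $\varphi$, so that $\|d\varphi\|^2=n\lambda^2$; condition (i) is then equivalent to $d\varphi(\nabla\lambda)$ being tangent to $B$ along $K$.

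Since $\varphi$ is a submersion, $K$ is a submanifold of co-dimension $p$. Its geometry splits as follows:
\begin{itemize}
\item $T_xK=\mathcal V_x\oplus\mathcal H_x\cap d\varphi^{-1}(T_{\varphi(x)}B)$;
\item the normal bundle of $K$ is the horizontal lift of the normal bundle of $B$, and $d\varphi$ maps it isomorphically and conformally onto $\nu B$.
\end{itemize}
It therefore suffices to compute $d\varphi$ applied to the normal part of $\operatorname{tr}\nabla^M$. I split the trace over an adapted orthonormal frame into a vertical part and a horizontal part.

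For the vertical part, summing $\nabla_UU$ over an orthonormal vertical frame gives $(m-n)\mu^{\mathcal V}$, where $\mu^{\mathcal V}$ is the mean curvature of the fibres. Its normal component contributes $(m-n)\,d\varphi(\mu^{\mathcal V})^{\perp_B}$.

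For the horizontal part, take $X,Y$ horizontal with $d\varphi X,d\varphi Y$ tangent to $B$. I will use the standard formula for the second fundamental form of a horizontally conformal map on horizontal vectors,
\[
\nabla d\varphi(X,Y)=X(\ln\lambda)\,d\varphi Y+Y(\ln\lambda)\,d\varphi X-g(X,Y)\,d\varphi(\nabla\ln\lambda),
\]
together with $d\varphi(\nabla^M_XY)=\nabla^N_{d\varphi X}d\varphi Y-\nabla d\varphi(X,Y)$. The first two terms are tangent to $B$ and drop out after projecting to $\nu B$. Tracing over an orthonormal frame of the horizontal lift of $TB$, whose image is an $h$-orthogonal frame of length $\lambda$, gives
\[
\lambda^2H_B+k\,d\varphi(\nabla\ln\lambda)^{\perp_B}.
\]

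Next I use harmonicity. The tension field of a horizontally conformal submersion is
\[
\tau(\varphi)=-(n-2)\,d\varphi(\nabla\ln\lambda)-(m-n)\,d\varphi(\mu^{\mathcal V}),
\]
which is the fundamental equation of Baird--Eells used implicitly in the theorem of Baird and Eells above. For a harmonic morphism it gives $(m-n)\,d\varphi(\mu^{\mathcal V})=-(n-2)\,d\varphi(\nabla\ln\lambda)$. Substituting this into the vertical part and combining with the horizontal part, the coefficient of $d\varphi(\nabla\ln\lambda)^{\perp_B}$ should collapse to $n-2-k=p-2$, up to an overall sign that I must track carefully. The expected identity is
\[
d\varphi(H_K)=\lambda^2H_B+(p-2)\,\bigl(d\varphi(\nabla\ln\lambda)\bigr)^{\perp_B},
\]
possibly with the opposite sign on the second term. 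Since $B$ is minimal and $p\neq2$, $H_K=0$ holds if and only if $d\varphi(\nabla\lambda)$ is tangent to $B$ along $K$, which is (i). This also explains why $p=2$ is excluded: in that case the gradient term disappears, consistent with Theorem~\ref{thm: co-dim-2}.

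The main obstacle is the sign and normalisation bookkeeping. In particular I need:
\begin{itemize}
\item that $\mathcal H\cap d\varphi^{-1}(TB)$ really gives the $\lambda^2$ scaling;
\item the correct sign in the second fundamental form formula;
\item the correct sign in the tension-field formula;
\end{itemize}
so that the vertical and horizontal contributions combine to $p-2$ rather than $2n-p-2$. I will verify this against the known case $n=2$, $p=1$ (Baird--Gudmundsson), and against $p=n$, where $K$ is a fibre and the identity must reduce to $\mu^{\mathcal V}\propto(n-2)\nabla\ln\lambda$.
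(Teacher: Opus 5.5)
Your proposal is correct, and the formulas you quote have the right signs: the second fundamental form of a horizontally conformal map on horizontal vectors, and $\tau=-(n-2)\,d\varphi(\nabla\ln\lambda)-(m-n)\,d\varphi(\mu^{\mathcal V})$. You reach the paper's identity by a different split of the trace.

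The paper (Lemma~\ref{lemma: taup-B}) pairs $\tau(\varphi)$ with a normal vector $Z$ of $B$. It splits the trace into vertical directions, lifts of $TB$ and lifts of $\nu B$. The first two groups give $d\varphi(H_{\varphi^{-1}(B)})$ and $\lambda^2H_B$. The $p$ lifted normal directions are evaluated with the Koszul-type formula of Proposition~\ref{prop: whc-connection}, and this produces the factor $p-2$ directly.

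You instead compute $H_K$ from the vertical and $TB$-lift directions only. You then trade the fibre mean curvature for $\nabla\ln\lambda$ via the fundamental equation. So the $n$ horizontal directions hidden in that equation and the $k$ directions tangent to $B$ combine to $k-(n-2)=-(p-2)$.

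If you keep $\tau$ instead of setting it to zero, your computation gives exactly the paper's lemma, so the two routes are equivalent. The paper's packaging makes explicit that only first-order horizontal conformality at points of $\varphi^{-1}(B)$ is used. It also yields the $\tau_p$ statement (Theorem~\ref{thm: reduc-infi}) at no extra cost. Your route relies on standard textbook formulas and makes the role of the fibres' mean curvature visible.

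On the sign you left open: use the convention $H=(\mathrm{tr}\,\nabla)^{\perp}$ implicit in your write-up. Then the vertical part contributes $-(n-2)\,(d\varphi\nabla\ln\lambda)^{\perp_B}$ and the horizontal part $+k\,(d\varphi\nabla\ln\lambda)^{\perp_B}$, so
\[
d\varphi(H_K)=\lambda^2H_B-(p-2)\,\bigl(d\varphi(\nabla\ln\lambda)\bigr)^{\perp_B}.
\]
This is the opposite of your first guess. The paper's $+(p-2)$ comes from its convention $h(\sum_i\nabla_{b_i}b_i,Z)=-h(H_B,Z)$.

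The sign does not matter for the equivalence. It only needs the coefficient to be nonzero for $p\neq2$, and $d\varphi$ to be injective on the (horizontal) normal bundle of $K$.
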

\medbreak

We remark that these theorems can be viewed as reduction theorems for minimal submanifolds, in the spirit of \cite{hsiang-lawson-71,palais-79} using group actions and \cite{palais-terng-86} using Riemannian submersions. From this perspective they are useful tools for generating new examples of minimal submanifolds. We mention that the main technical step in showing our statements, \cref{lemma: taup-B} below, can be applied to a wider class of settings and reduction problems than indicated by the above theorems (such as prescribed mean curvature problems in the setting of $p$-harmonic morphisms).

In the second part of the paper we use the above results in order to find a novel family of area-minimising cones in $\mtr^m$. To do this we will be finding minimal cones by applying \cref{thm: co-dim-p} to \emph{polynomial harmonic morphisms} between two euclidean spaces. Polynomial harmonic morphisms are an important class of harmonic morphisms, interesting in their own right \cite{baird-83,eells-yiu-95,ABB-99,riedler-polynomials-23}. Since much about their general structure or classification still remains unknown, we mainly restrict ourselves to degree two polynomials, classified by Ou and Wood \cite{ou-wood-96,ou-97}.

This classification makes use of so-called Clifford systems, see \cref{def: clifford-system} below for their definition. Clifford systems seem well-adapted to the construction of area-minimisers, see \cite{cui-26} for recent work constructing area-minimising co-dimension $2$ cones. They also appear in the $g=4$ case for the area-minimisers constructed by Wang \cite{wang-minimal}. We show:

\begin{theorem}\label{thm: Scone-intro}
Let $m,n\in\mtn$, $m\geq32$ and $n\geq4$. For $A_1,...,A_{2n}\in M_{m\times m}(\mtr)$ a Clifford system, the cone
$$C^4_{m,n}\defeq\left\{x\in\mtr^m\ \middle|\ \sum_{i=1}^n \langle x,A_ix\rangle^2=\sum_{i=1}^n \langle x,A_{n+i}x\rangle^2\right\}$$
is an area-minimising hypersurface in $\mtr^m$.
\end{theorem}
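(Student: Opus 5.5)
Write $A_1,\dots,A_{2n}$ for the Clifford system. My approach is to realise $C^4_{m,n}$ as the pre-image of a minimal cone under a degree two polynomial harmonic morphism, apply \cref{thm: co-dim-p} to get minimality, and then prove area-minimality with a calibration argument.

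\textbf{The harmonic morphism.} Following Ou and Wood, the map
$$\varphi\colon\mtr^m\to\mtr^{2n},\qquad \varphi(x)=(\langle x,A_1x\rangle,\dots,\langle x,A_{2n}x\rangle)$$
should be a harmonic morphism. Since the $A_i$ are symmetric, anticommute and square to the identity, each component is traceless (hence harmonic) and the gradients $2A_ix$ are mutually orthogonal of common length $2|x|$. So $\varphi$ is horizontally conformal with $\|d\varphi\|^2=8n|x|^2$ and is submersive away from $0$. In $\mtr^{2n}$ take
$$B=\{y\mid y_1^2+\dots+y_n^2=y_{n+1}^2+\dots+y_{2n}^2\},$$
the cone over $S^{n-1}\times S^{n-1}$ (Clifford torus). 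Then $C^4_{m,n}=\varphi^{-1}(B)$.

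\textbf{Minimality.} $B$ is a minimal hypersurface, i.e.\ $p=1\neq2$. Since $\nabla\|d\varphi\|^2=16n\,x$ and $\varphi$ is homogeneous of degree $2$, we get $d\varphi(x)=2\varphi(x)$, which is radial and hence tangent to the cone $B$. Condition (i) of \cref{thm: co-dim-p} therefore holds, and $C^4_{m,n}\setminus\{0\}$ is a minimal hypersurface. Equivalently, the link is a minimal isoparametric-type hypersurface in $S^{m-1}$; it should be isoparametric with $g=4$, of OT–FKM type built from the Clifford system.

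\textbf{Area-minimality.} This is the main obstacle. I would first try to exhibit a calibration foliation: find sub- and super-solutions on each side of $C$ by pulling back, via $\varphi$, the known foliation of $\mtr^{2n}$ by minimal hypersurfaces asymptotic to $B$ (Bombieri–De Giorgi–Giusti/Lawlor style). Since $\varphi$ is a harmonic morphism, pulled-back level sets $\varphi^{-1}(F_t)$ of a function $F$ satisfy a mean curvature identity involving $\|d\varphi\|^2$ and the term $d\varphi(\nabla\|d\varphi\|^2)$. Using \cref{lemma: taup-B}-type formulas, I would compute the mean curvature of $\varphi^{-1}(\{F=t\})$ and reduce the problem to an ODE in the two variables $r_1=|(y_1,\dots,y_n)|$ and $r_2=|(y_{n+1},\dots,y_{2n})|$, with weights reflecting the fibre dimension $m-2n$.

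The remaining work is to construct, for $m\geq32$, monotone solutions of this ODE in the $(r_1,r_2)$ quarter plane that never cross the diagonal. These give a foliation of each side of $C$ by hypersurfaces of mean curvature of one sign, and hence a calibration. Alternatively, I would use Lawlor's curvature criterion for cones with this explicit second fundamental form. The threshold $m\geq32$ should come from the analysis of the linearised ODE at the cone, via a discriminant condition analogous to $m\geq8$ for Simons' cones. Verifying this estimate uniformly in $n\geq4$ is where I expect the difficulty, and I would handle it by explicit comparison functions of the form $r_1^\alpha-r_2^\alpha$.
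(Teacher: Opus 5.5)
Your minimality step is fine, with one correction: the singular set of $C^4_{m,n}$ is the $(m-2n)$-dimensional cone $\varphi^{-1}(0)$, not $\{0\}$. For the same reason the link in $S^{m-1}$ is singular, so it is not isoparametric, Lawlor's criterion for regular cones is not available, and any calibration argument has to deal with $\varphi^{-1}(0)$.

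The genuine gap is in the area-minimality step: foliations pulled back from $\mtr^{2n}$ cannot work. For $g=F\circ\varphi$ one has $\Delta g=4|x|^2(\Delta F)\circ\varphi$, $|\nabla g|=2|x|\,|\nabla F|\circ\varphi$ and $d\varphi_x(x)=2\varphi(x)$. Hence (this is the case $p=1$ of \cref{lemma: taup-B})
\[
\mathrm{div}\Big(\frac{\nabla g}{|\nabla g|}\Big)(x)=2|x|\,D(y)-\frac{\langle y,\nu_F(y)\rangle}{|x|^{3}},\qquad y=\varphi(x),\quad \nu_F=\frac{\nabla F}{|\nabla F|},\quad D=\mathrm{div}\,\nu_F .
\]
Here $|x|^2\geq|y|$, with equality attained on every fibre (on the $+1$-eigenspace of $\sum_i y_iA_i/|y|$), and $|x|$ is not constant on fibres. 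Three consequences follow.

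First, there is no reduction to an ODE in $(r_1,r_2)$, and neither $m$ nor $m-2n$ enters at all, because the fibres are minimal and $\lambda=2|x|$. So no threshold in $m$ can come out of this analysis.

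Second, the extra term has the wrong sign. On $\{F<0\}$ with $F$ homogeneous of positive degree, $\langle y,\nu_F\rangle<0$. Minimal (Hardt--Simon) leaves, with $D=0$, therefore give $\mathrm{div}>0$. In general, subcalibrating $\{g<-\epsilon\}$ requires, at the points with $|x|^4=|y|^2$, the strictly stronger inequality $D(y)\le\langle y,\nu_F\rangle/(2|y|^2)<0$.

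Third, for $O(n)\times O(n)$-invariant $F$ homogeneous of degree $k$ (e.g.\ $r_1^\alpha-r_2^\alpha$), a first-order computation at the cone turns this into $k^2-(2n-\frac32)k+4n-4\le0$. The De Philippis--Paolini argument in $\mtr^{2n}$ needs only $k^2-(2n-1)k+4n-4\le0$. For $n=4$ the latter gives $3\le k\le 4$, while the former has no real solution. So your plan cannot reach $n=4$, $m\geq32$ (e.g.\ $C^4_{32,4}$).

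The missing idea is a defining function that is not a pull-back. The paper uses $f=\|P\|^{3/2}\|x\|^2(s\circ P)$. It is printed with exponent $\frac12$, but the displayed divergence formula, homogeneous of degree $-1$ with coefficients $12n-47$ and $2m-41$, is the one for $\frac32$; with $\frac12$ the first coefficient would be $4n-19$. The factor $\|x\|^2$ varies along the fibres, and through $\Delta\|x\|^2=2m$ it adds a positive term growing with $m$ that outweighs the bad term above.

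Using only $\langle\nabla P_i,\nabla P_j\rangle=4\|x\|^2\delta_{ij}$, $\Delta P_i=0$ and Euler's relation, $\mathrm{div}(\nabla f/\|\nabla f\|)$ equals $(s\circ P)/\|\nabla f\|^3$ times a sum of monomials. Their coefficients are positive once $n\geq4$ and $m\geq21$. Since $m$ is a multiple of $\underline m(2n)\geq16$, this gives $m\geq32$; so the threshold is a sign condition, not a discriminant.

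The proof then concludes with subcalibrations:
\begin{enumerate}
\item $\nabla f/\|\nabla f\|$ subcalibrates $\{f<-\epsilon\}$ in $\mtr^m\setminus P^{-1}(0)$.
\item The codimension-$2n$ set $P^{-1}(0)$ is invisible to perimeter.
\item Letting $\epsilon\to0$ in $L^1_{\mathrm{loc}}$ gives subminimality of $\{f<0\}$.
\item The same argument for $\{f>0\}$, together with point 2 of \cref{thm: dpp09}, gives minimality.
\end{enumerate}
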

\begin{remark}
For all values of $(m,n)$ the mean curvature of $C^4_{m,n}$ vanishes at its regular points, i.e.\ $C^4_{m,n}\setminus \{x\in\mtr^m \mid \langle x, A_ix\rangle = 0\ \forall i\in\{1,...,2n\}\,\}$ is a minimal conical submanifold of $\mtr^m$.
\end{remark}

\begin{example}
We give the examples $C^4_{8,2}$, $C^4_{16,4}$, and $C^4_{32,4}$ explicitly. Up to an isometry of the domain these cones are uniquely determined by the values of $m,n$.
\begin{gather*}
C^4_{8,2}=\bigl\{(z_1,z_2,u_1,u_2)\in\mtc^4\,\big|\, \|z_1 u_1+z_2u_2\|^2 = \|\overline{z_2}z_1-\overline{u_2}u_1 \|^2 \bigr\},\\
C^4_{16,4}=\bigl\{(z_1,z_2,u_1,u_2)\in\mth^4\,\big|\,\|z_1 u_1+z_2u_2\|^2 = \| \overline{z_2}z_1-\overline{u_2} u_1\|^2 \bigr\},\\
C^4_{32,4}=\biggl\{(z_1,z_2,z_3,z_4,u_1,u_2,u_3,u_4)\in\mth^8\ \bigg|\ \Bigl\|\sum_{i=1}^4 z_iu_i\Bigr\|^2 = \Bigl\|\sum_{i=1}^2\overline{z_{2i}}z_{2i-1}-\overline{u_{2i}}u_{2i-1}\Bigr\|^2\biggr\}.
\end{gather*}
Here the multiplications and conjugations are in the appropriate (skew-) field $\mtc$ or $\mth$.
\end{example}
\begin{remark}
Of the listed examples $C^4_{8,2}$ is not an area-minimiser, $C^4_{32,4}$ however is by \cref{thm: Scone-intro}. For $C^4_{16,4}$ it remains open as to whether or not it minimises area -- indeed $C^4_{16,4}$ is the only cone in the family $C^4_{m,n}$ for which it is not clear if it is an area-minimiser or not (cf.\ \cref{rem: area-min} in the main text).
\end{remark}

\subsection*{Structure of the article}
In \cref{sec: prelim} we review preliminary definitions and facts that we will use later. In \cref{sec: reduc} we prove the reduction theorems. In \cref{sec: area-min} we apply these reduction theorems to polynomial harmonic morphisms and prove \cref{thm: Scone-intro}.

In \cref{app: exist} we prove that for $(M,g)$ a Riemannian manifold, $x\in M$, and $V\subset T_xM$ a subvectorspace, there exists a minimal submanifold $B$ of $M$ with $x\in B$ and $T_xB=V$. This is a standard fact from Riemannian geometry, which we require for proving \cref{thm: co-dim-2}, but for which we have not been able to locate an explicit reference.

\subsection*{Acknowledgements}
Funded by the European Union (ERC Starting Grant 101116001 – COMSCAL). Views and opinions expressed are however those of the author(s) only and do not necessarily reflect those of the European Union or the European Research Council. Neither the European Union nor the granting authority can be held responsible for them.\\
This paper contains results of the author's PhD thesis at the University M\"unster. The author gratefully acknowledges the support of the Deutsche Forschungsgemeinschaft (DFG, German Research Foundation) -- Germany’s Excellence Strategy EXC 2044 390685587, Mathematics M\"unster: Dynamics-Geometry-Structure.

\section{Preliminaries}\label{sec: prelim}

In this section we review basic definitions and relations that we will use. For an overview on the general theory and literature related to harmonic morphisms we recommend both the book \cite{baird-wood-book} and the regularly updated online bibliography \cite{hm-bib}.

\begin{defn}
A map $\varphi\colon(M,g)\to(N,h)$ between two Riemannian manifolds is called a \emph{harmonic morphism} if for every $U\subseteq N$ open and $f\colon U\to\mtr$ harmonic the composition $f\circ\varphi\colon\varphi^{-1}(U)\to\mtr$ is again harmonic.
\end{defn}
Originally this class of maps was investigated in the potential-theoretic setting of Brelot harmonic spaces \cite{const-corn-65}. The geometric theory begins with the following theorem, found independently by Fuglede and Ishihara:
\begin{theorem}[\cite{fuglede-78,ishihara-79}]\label{thm: hm-char}
A map $\varphi\colon(M,g)\to(N,h)$ is a harmonic morphism if and only if it is both weakly horizontally conformal and harmonic.
\end{theorem}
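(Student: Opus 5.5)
This is the Fuglede–Ishihara theorem, quoted from the literature. I will follow the standard proof, based on a local computation of the Laplacian of a composition.

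The starting point is the chain rule for the tension field. For $\varphi\colon M\to N$ and $f$ a function germ on $N$,
$$\Delta_M(f\circ\varphi)=df(\tau(\varphi))+\operatorname{tr}_g \nabla df(d\varphi\,\cdot,d\varphi\,\cdot).$$
Here $\tau(\varphi)=\operatorname{tr}_g\nabla d\varphi$ is the tension field. The second term equals $\langle \nabla df, d\varphi\, d\varphi^*\rangle_h$.

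For sufficiency, suppose $\varphi$ is weakly horizontally conformal, so that $d\varphi\,d\varphi^*=\lambda^2\,\mathrm{id}$ at each point. Suppose also that $\varphi$ is harmonic, so $\tau(\varphi)=0$. Then for harmonic $f$,
$$\Delta_M(f\circ\varphi)=\lambda^2\Delta_N f=0.$$

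For necessity, the idea is to prescribe the first and second derivatives of harmonic germs at a point. The key tool is a local existence lemma. Fix $y\in N$, any covector $\xi\in T^*_yN$, and any symmetric bilinear form $S$ on $T_yN$ with $\operatorname{tr}_h S=0$. Then there is a harmonic $f$ near $y$ with $df_y=\xi$ and $\nabla df_y=S$. I would prove this by taking harmonic coordinates near $y$. In these coordinates, every polynomial $a+\xi_i x^i+\tfrac12 S_{ij}x^ix^j$ with $g^{ij}(y)S_{ij}=0$ is harmonic up to an error that vanishes to order two at $y$. This error can be corrected by solving a Dirichlet problem on a small ball with a correction that is $O(|x|^3)$. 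Alternatively, I would appeal to the standard existence of harmonic functions with a prescribed 2-jet subject to the trace condition, as in Ishihara's paper.

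Given the lemma, the argument at $x\in M$ with $y=\varphi(x)$ goes in two steps.

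1. Choose $\xi$ arbitrary and $S=0$. Then $df(\tau(\varphi))=0$ for all $\xi$, so $\tau(\varphi)=0$ and $\varphi$ is harmonic.
2. With $\tau(\varphi)=0$, we get $\langle S, d\varphi\,d\varphi^*\rangle=0$ for every trace-free symmetric $S$. Hence the symmetric endomorphism $d\varphi\,d\varphi^*$ is a multiple of the identity, $\lambda^2\,\mathrm{id}$ with $\lambda^2\ge0$. This is exactly weak horizontal conformality.

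I expect the main obstacle to be the existence lemma: producing harmonic germs with arbitrary trace-free 2-jets on a curved manifold. This requires an elliptic perturbation argument, for instance Schauder estimates on shrinking balls or the implicit function theorem. Everything else is linear algebra at a point.
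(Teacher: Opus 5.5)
Your argument is correct and is the classical Fuglede--Ishihara proof that the paper cites without reproducing. It combines the composition formula $\Delta_M(f\circ\varphi)=df(\tau(\varphi))+\langle\nabla df,d\varphi\,d\varphi^*\rangle$ with the existence of local harmonic functions having any prescribed trace-compatible $2$-jet (Fuglede's Lemma~A.1.1 in \cite{baird-wood-book}, which the paper also names as the model for its appendix).

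The only loose point is your harmonic-coordinates sketch of that lemma. A single Dirichlet correction on a ball of radius $r$ perturbs the prescribed Hessian by $O(r)$; it is not automatically $O(|x|^3)$. You therefore need the extra finite-dimensional step: the linear map from prescribed jets to jets of the corrected harmonic functions is $\mathrm{id}+O(r)$, hence invertible for small $r$. Alternatively, simply cite the lemma, as you already propose.
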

We now define the terms appearing in \cref{thm: hm-char}.
\begin{defn}
Let $\varphi\colon(M,g)\to(N,h)$ be a smooth map between two Riemannian manifolds. 
\begin{enumerate}
\item $\varphi$ is \emph{weakly horizontally conformal} if for all points $x\in M$ one has either $d\varphi(x)=0$ or that $d\varphi(x)$ maps the horizontal space $\ker(d\varphi)^\perp$ conformally onto $T_{\varphi(x)}N$.
\item $\varphi$ is \emph{harmonic} if it is a stationary point of the energy functional $\varphi\mapsto E_2(\varphi)=\frac12\int_M \|d\varphi\|^2\,d\Vol_g$. Here $\|d\varphi\|^2=\Tr(d\varphi^Td\varphi)$ denotes the Hilbert-Schmidt norm of $d\varphi$.
\end{enumerate}
\end{defn}
\begin{remark}
\begin{enumerate}
\item If $\varphi\colon(M,g)\to(N,h)$ is weakly horizontally conformal there eixsts a function $\lambda\colon M\to\mtr$, called the \emph{conformality factor}, so that $\lambda^2 g(v,w)=h(d\varphi\, v,d\varphi\, w)$ for all horizontal vectors $v,w\in\ker(d\varphi)^\perp$.
\item If $e_i$ denotes a local orthonormal frame on $M$, then $\varphi\colon(M,g)\to(N,h)$ is harmonic if and only if the \emph{tension field}
\begin{equation}
\tau(\varphi)\defeq\Tr(\nabla d\varphi)=\sum_{i}\nabla^{\varphi^*}_{e_i}(d\varphi(e_i))-\sum_id\varphi(\nabla_{e_i}^Me_i)\label{eq: def-tau}
\end{equation}
vanishes, here $\nabla^{\varphi^*}$ denotes the Levi-Civita connection of $N$ pulled back to $\varphi^*(TN)$ and $\nabla^M$ the Levi-Civita connection of $M$.
\end{enumerate}
\end{remark}

Another relevant operator we will make use of is the $p$-tension field, it generalises the tension field and is the variation of the functional $E_p(\varphi)=\frac12\int_M\|d\varphi\|^p\,d\Vol_g$.
\begin{defn}
Let $\varphi\colon(M,g)\to(N,h)$ be a smooth map between Riemannian manifolds and $p\in\mtr_{>0}$. The \emph{$p$-tension field} of $\varphi$ is the section of $\varphi^*(TN)$ given by
\begin{equation}
\tau_p(\varphi)\defeq\|d\varphi\|^{p-2}(\tau(\varphi)+(p-2)d\varphi(\nabla\ln\|d\varphi\|)\,).\label{eq: p-tension-def}
\end{equation}
\end{defn}

We give some basic examples of harmonic morphisms.
\begin{example}
\begin{enumerate}
\item Riemannian submersions with minimal fibres are harmonic morphisms. Classical examples are the Hopf fibrations $S^{2m-1}\to S^m$, $m\in\{1,2,4,8\}$, quotient maps $G/H\to G/K$ for $H\subset K$ closed subgroups of a Lie group $G$ equipped with a bi-invariant metric, and the projection $(TM,g\subs{Sasaki})\to(M,g)$ where $(M,g)$ is a Riemannian manifold and $g\subs{Sasaki}$ is the Sasaki metric on the tangent bundle.
\item For $(M,g)$, $(N,h)$ Riemannian manifolds and $f\colon M\to\mtr$ smooth, the projection of the warped product $(M\times N, g+f^2h)\to (N,h)$ is a harmonic morphism. Classical examples include quotient maps to projective spaces $\mtk^{m+1}\setminus\{0\}\to\mathds{KP}^m$ for $\mtk\in\{\mtr,\mtc,\mth\}$ and the projection $S^{m+1}\setminus\{(\pm1,0,...,0)\}\to S^m$ to the latitudes.
\end{enumerate}
\end{example}

Many specialised examples and constructions exist. In \cref{sec: area-min} we will be using polynomial harmonic morphisms in order to apply our reduction theorems. These form a rich and important class of harmonic morphisms, see also Chapter 5 of \cite{baird-wood-book}. 
The cases of co-domain $\mtr^2$ ($\cong\mtc$) and $\mtr^n$, $n\geq3$, are very different.

Polynomial harmonic morphisms have a special role in the general theory of harmonic morphisms, since a general harmonic morphism is modelled by a polynomial harmonic morphism at its critical points:

\begin{theorem}[\cite{fuglede-78}]
Let $\varphi\colon(M,g)\to(N,h)$ be a harmonic morphism, not constant on any connected component of $M$. Let $x\in M$ be a critical point of $\varphi$, then the order of $\varphi$ at $x$ is finite and the symbol of $\varphi$ at $x$ is a homogeneous polynomial harmonic morphism $T_xM\to T_{\varphi(x)}N$.
\end{theorem}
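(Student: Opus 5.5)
The plan is to reduce everything to local statements about harmonic functions in coordinates. Then the leading homogeneous part of the Taylor expansion of $\varphi$ at $x$ inherits both defining properties of \cref{thm: hm-char}.

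\textbf{Step 1 (coordinate components are harmonic).} Choose harmonic coordinates $y^1,\dots,y^n$ on a neighbourhood $U$ of $\varphi(x)$, i.e.\ coordinates with $\Delta_N y^a=0$; these exist locally on any Riemannian manifold. By the definition of a harmonic morphism, each $\varphi^a\defeq y^a\circ\varphi$ is harmonic on $\varphi^{-1}(U)$. Hence each $\varphi^a$ solves the linear elliptic equation $\Delta_g\varphi^a=0$, which has smooth coefficients.

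\textbf{Step 2 (finite order).} Here I would invoke Aronszajn's strong unique continuation theorem for such equations. If $\varphi^a-\varphi^a(x)$ vanishes to infinite order at $x$, then it vanishes identically near $x$. Consider the set $Z$ of points $z$ at which $\varphi$ agrees with the constant $\varphi(z)$ to infinite order. By unique continuation, each point of $Z$ has a neighbourhood on which $\varphi$ is constant, and that neighbourhood lies in $Z$, so $Z$ is open. $Z$ is closed, since infinite-order vanishing of all derivatives is a closed condition and $\varphi$ is continuous. Since $\varphi$ is not constant on any component, $Z=\emptyset$. So some $\varphi^a-\varphi^a(x)$ vanishes to a finite order, and I let $k$ be the minimum of these orders over $a$. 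Since $d\varphi(x)=0$, we have $k\geq2$.

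Work in $g$-normal coordinates $v$ centred at $x$, identified with $T_xM$. Write $\varphi^a(v)-\varphi^a(x)=P^a(v)+O(|v|^{k+1})$. Here $P=(P^a)$ is a nonzero homogeneous polynomial map of degree $k$ with values in $T_{\varphi(x)}N$; this $P$ is the symbol.

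\textbf{Step 3 (the symbol is a harmonic morphism).} I will show $P$ is harmonic and weakly horizontally conformal for the constant metrics $g_x$ and $h_{\varphi(x)}$, and then conclude by \cref{thm: hm-char}.

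For harmonicity, in normal coordinates $\Delta_g=\Delta_0+(\text{coefficients vanishing at }0)\cdot\partial^2+(\text{bounded})\cdot\partial$. The lowest-order term of $\Delta_g\varphi^a$ is therefore $\Delta_0P^a$, which is homogeneous of degree $k-2$. It must vanish, so $P^a$ is Euclidean harmonic.

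For conformality, the identity $g^{ij}\partial_i\varphi^a\partial_j\varphi^b=\lambda^2h^{ab}(\varphi)$ holds everywhere, with $\lambda=0$ at critical points. I expand both sides and compare the lowest-order terms, which are of degree $2k-2$. This gives
$$\langle\nabla P^a,\nabla P^b\rangle_{g_x}=\Lambda\,h^{ab}(\varphi(x)),$$
where $\Lambda$ is the degree-$(2k-2)$ part of $\lambda^2$. To justify this, I take the trace against $h_{ab}(\varphi(x))$: this shows $\lambda^2=\frac1n\|d\varphi\|^2$ has a leading term of degree $2k-2$, and that term is $\Lambda=\frac1n\|dP\|^2$. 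The displayed identity is exactly weak horizontal conformality of $P$.

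\textbf{Main obstacle.} The delicate point is Step 2, the finite-order assertion. It genuinely needs strong unique continuation for elliptic equations with variable coefficients, together with the connectedness argument that promotes local constancy to constancy on a component. Step 3 is then bookkeeping of lowest-order Taylor terms.
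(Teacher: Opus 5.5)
The paper does not prove this statement itself; it quotes it from \cite{fuglede-78}. Your argument is a correct version of the standard proof: Aronszajn's strong unique continuation for the harmonic functions $y^a\circ\varphi$, together with your open--closed argument, gives finite order. Comparing lowest-order homogeneous terms in $\Delta_g\varphi^a=0$ and in $g^{ij}\partial_i\varphi^a\partial_j\varphi^b=\lambda^2\,h^{ab}\circ\varphi$, with the trace identity $n\lambda^2=\|d\varphi\|^2$ fixing the leading part of $\lambda^2$, shows the symbol is harmonic and weakly horizontally conformal, hence a harmonic morphism by the Fuglede--Ishihara theorem. The only details you leave implicit are routine: openness of $Z$ needs harmonic coordinates around $\varphi(z)$ for each $z\in Z$, not just around $\varphi(x)$, and the error estimates in Step 3 use that the smooth Taylor remainder satisfies $\partial^\beta O(|v|^{k+1})=O(|v|^{k+1-|\beta|})$.
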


In this article we will work with polynomial harmonic morphisms of degree $2$, which are classified by work of Ou and Wood \cite{ou-wood-96,ou-97}. Few examples with co-domain $\mtr^n$, $n\geq3$, and degree $d\geq3$ are known, although some general constructions using Clifford systems are possible.

We close this section by noting the following statements about harmonic morphisms to euclidean spaces:

\begin{theorem}[\cite{ABB-99}]\label{thm: poly-hm}
\begin{enumerate}
\item If $n\geq3$ then every harmonic morphism from $\mtr^m$ to $\mtr^n$ is a polynomial mapping.
\item If $n\geq2$ then every weakly horizontally conformal polynomial map $P:\mtr^m\to\mtr^n$ is harmonic and thus a harmonic morphism.
\end{enumerate}
\end{theorem}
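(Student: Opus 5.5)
We treat the two parts separately. Part (1) reduces to a growth estimate followed by Liouville's theorem for harmonic functions. Part (2) reduces to the algebraic identity expressing horizontal conformality, analysed through the top-degree homogeneous part of the polynomial. Throughout we use \cref{thm: hm-char}: a harmonic morphism $\varphi=(\varphi_1,\dots,\varphi_n)\colon\mtr^m\to\mtr^n$ is the same as a map whose components are harmonic functions satisfying
$$\langle\nabla\varphi_i,\nabla\varphi_j\rangle=\lambda^2\delta_{ij},$$
where $\lambda$ is the conformality factor.

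For (1), let $n\geq3$ and $a\in\mtr^n$. The function $y\mapsto\|y-a\|^{2-n}$ is positive and harmonic on $\mtr^n\setminus\{a\}$. Since $\varphi$ is a harmonic morphism, $u_a\defeq\|\varphi-a\|^{2-n}$ is a positive harmonic function on $\mtr^m\setminus\varphi^{-1}(a)$.

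\begin{itemize}
\item If $\varphi$ omits the value $a$, Liouville's theorem for positive harmonic functions makes $u_a$ constant. Then $\varphi$ lies on a sphere around $a$, and a short argument shows $\varphi$ is constant.
\item In general, we apply Harnack's inequality to $u_a$ on large balls that avoid a fibre. For instance, take $a$ far from $\varphi(B_R)$, or let $a$ range over a sphere $\|a\|=\rho$ and average. This should give a bound of the form $\|\varphi(x)\|\leq C(1+\|x\|)^k$.
\item Each component $\varphi_i$ is then a harmonic function on $\mtr^m$ of polynomial growth, hence a polynomial by the classical Liouville theorem.
\end{itemize}

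The main obstacle is extracting this polynomial growth estimate uniformly. Fibres of $\varphi$ may be unbounded, so Harnack's inequality cannot be applied naively on all of $\mtr^m\setminus\varphi^{-1}(a)$. I would first try to exploit the freedom of choosing $a$ depending on the ball, together with the scaling behaviour of the Harnack constant.

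For (2), let $P$ be a weakly horizontally conformal polynomial map with $n\geq2$. Decompose each component as $P=\sum_{k\le d}P^{(k)}$ into homogeneous parts, and compare homogeneous degrees in the conformality identity. The plan is to show that the polynomial vector $\tau(P)=\Delta P=(\Delta P_1,\dots,\Delta P_n)$ vanishes identically, proceeding in three steps.

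\begin{enumerate}
\item On the dense open regular set, use the fundamental equation for horizontally conformal submersions:
$$\tau(P)=-(n-2)\,dP(\nabla\ln\lambda)-(m-n)\,dP(\mu^{\mathcal V}),$$
where $\mu^{\mathcal V}$ is the mean curvature of the fibres. This expresses $\Delta P$ through $\lambda^2$, which is itself a polynomial.
\item Compute $\Delta\|P\|^2=2n\lambda^2+2\langle P,\Delta P\rangle$ and compare top-degree terms. For a homogeneous polynomial $Q$ of degree $2d-2$ one has $\int_{S^{m-1}}\Delta Q\,\leq$ something controlled by the degree, via the identity $\Delta(r^{2}Q)=\dots$ on spheres. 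Using this, aim to show that the top-degree part of $\langle P,\Delta P\rangle$ forces $\Delta P^{(d)}=0$.
\item Descend degree by degree. Equivalently, first prove the homogeneous case, then apply it to the leading part and to the lower-order parts using the cross terms in the conformality identity.
\end{enumerate}

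The key difficulty is that weak horizontal conformality is a quadratic condition, whereas harmonicity is linear. The induction on degree must therefore handle the mixed terms $\langle\nabla P^{(k)}_i,\nabla P^{(l)}_j\rangle$. If the direct comparison fails, I would fall back on a polynomial version of the Fuglede symbol argument: $\lambda^2$ is a nonnegative polynomial, and any nonzero polynomial $\Delta P$ would produce incompatible growth in the identity from step 1 along generic rays.
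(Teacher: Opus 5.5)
The paper gives no proof of this statement; it quotes it from \cite{ABB-99}. So your proposal has to stand on its own. In both parts the step that carries the content is announced but not proved, and the mechanisms you suggest would not deliver it.

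\textbf{Part 1.} Pulling back $\|y-a\|^{2-n}$, proving polynomial growth and applying Liouville is a sound architecture. However, Harnack on balls that avoid a fibre cannot give the growth bound. If $a\notin\varphi(B_{2r}(x_0))$, Harnack gives $\max_{B_r(x_0)}\|\varphi-a\|\leq C\min_{B_r(x_0)}\|\varphi-a\|$. For $a$ far from the image this holds trivially. Otherwise it controls $\varphi$ on the inner ball by information on the outer one, whereas polynomial growth needs the reverse, namely a doubling bound $\max_{B_{2R}}\|\varphi\|\leq K\max_{B_R}\|\varphi\|$. Averaging over $\|a\|=\rho$ produces $\min(\rho^{2-n},\|\varphi\|^{2-n})$, whose super-mean-value inequality only bounds $\|\varphi\|$ from below.

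The missing observation is that the fibre is harmless. The function $u_a=\|\varphi-a\|^{2-n}$ is harmonic off $\varphi^{-1}(a)$ and equal to $+\infty$ on it, so it is positive and superharmonic on all of $\mtr^m$. An upper bound on $\|\varphi-a\|$ is a lower bound on $u_a$, and that is exactly what the minimum principle provides. For nonconstant $\varphi$ one has $m\geq n\geq 3$. Set $\epsilon=\min_{\|x\|=1}u_a>0$. Then $u_a-\epsilon\|x\|^{2-m}$ is superharmonic on $\{\|x\|>1\}$ with nonnegative boundary values on $\{\|x\|=1\}$ and at infinity. Hence $\|\varphi(x)-a\|\leq\epsilon^{-1/(n-2)}\|x\|^{(m-2)/(n-2)}$ for $\|x\|\geq 1$, and Liouville gives polynomial components of degree at most $(m-2)/(n-2)$.

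\textbf{Part 2.} Here there is no argument yet.
Step 1 does not express $\Delta P$ through $\lambda^2$: it involves the unknown mean curvature of the fibres, and for $n=2$ the $\lambda$-term vanishes altogether.
Step 2 cannot work even in the homogeneous case you intend to start from. All terms of $\Delta\|P\|^2=2n\lambda^2+2\langle P,\Delta P\rangle$ have the same degree. Moreover, $\int_{S^{m-1}}\Delta Q=k(k+m-2)\int_{S^{m-1}}Q$ holds for every homogeneous $Q$ of degree $k$, so sphere integrals add nothing beyond the pointwise identity.
Some global use of polynomiality is unavoidable: $(x_1,\sqrt{x_2^2+x_3^2})$ is horizontally conformal off the axis $x_2=x_3=0$, yet $\Delta\sqrt{x_2^2+x_3^2}\neq 0$. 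Your fallback about incompatible growth along rays is not an argument as stated.

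One route that works goes as follows.
\begin{enumerate}
\item For $i\neq j$ the polynomial $F=P_i+\mathrm{i}P_j$ satisfies $\sum_k(\partial_kF)^2=0$. This is a polynomial identity, so it persists for the holomorphic extension of $F$ to $\mtc^m$.
\item Differentiating gives $\mathrm{Hess}F\cdot\nabla F=0$. By uniqueness for a linear ODE, $\nabla F$ is then constant along every complex line $y+t\nabla F(y)$.
\item Differentiating $\nabla F(y+t\nabla F(y))=\nabla F(y)$ in $y$ gives $\mathrm{Hess}F(y+t\nabla F(y))\,(I+t\,\mathrm{Hess}F(y))=\mathrm{Hess}F(y)$ for all $t\in\mtc$.
\item Apply this to an eigenvector $v$ of $\mathrm{Hess}F(y)$ with eigenvalue $\mu\neq 0$ at $t=-1/\mu$. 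The left side vanishes, so $\mu v=0$, a contradiction.
\item Hence $\mathrm{Hess}F(y)$ is nilpotent and $\Delta P_i+\mathrm{i}\Delta P_j=\mathrm{tr}\,\mathrm{Hess}F=0$. This uses $n\geq 2$.
\end{enumerate}
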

\begin{lemma}[\cite{fuglede-78}]\label{lemma: whc-euc}
Let $\varphi\colon(M,g)\to\mtr^n$ be smooth. Then $\varphi=(\varphi_1,...,\varphi_n)$ is weakly horizontally conformal if and only if
$$\langle \nabla\varphi_i(x),\nabla\varphi_j(x)\rangle=\delta_{ij}\,\|\nabla\varphi_i(x)\|^2$$
for all $i,j\in\{1,...,n\}$.
\end{lemma}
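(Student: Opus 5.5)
This is Fuglede's characterisation for maps into $\mathbb{R}^n$, and I would prove it by pointwise linear algebra on $d\varphi(x)$, treating each $x\in M$ separately. Fix $x$ and let $L=d\varphi(x)\colon T_xM\to\mathbb{R}^n$. Write $L^T\colon\mathbb{R}^n\to T_xM$ for its adjoint with respect to $g_x$ and the euclidean inner product. Then $L^Te_i=\nabla\varphi_i(x)$, and hence
$$\langle\nabla\varphi_i(x),\nabla\varphi_j(x)\rangle=\langle LL^Te_i,e_j\rangle .$$
The stated condition therefore says exactly that the symmetric matrix $LL^T$ is diagonal with entries $\|\nabla\varphi_i\|^2$. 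The lemma asserts in addition that these diagonal entries are all equal, so that $LL^T=\lambda^2\,\mathrm{Id}$. I read the condition in this intended sense: the gradients are mutually orthogonal and of common length $\lambda(x)$, since $\delta_{ij}\|\nabla\varphi_i\|^2$ alone would lose the equal-length requirement. Accordingly, the key claim is that $L$ is either zero or conformal on $(\ker L)^\perp$ if and only if $LL^T=\lambda^2\mathrm{Id}$ for some $\lambda\geq0$.

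For the forward direction, first suppose $L=0$; then $LL^T=0$ and we are done. Otherwise $L$ maps $H=(\ker L)^\perp$ conformally onto $\mathbb{R}^n$ with factor $\lambda>0$, meaning $\langle Lv,Lw\rangle=\lambda^2\langle v,w\rangle$ for $v,w\in H$. Since $\operatorname{im}L^T=(\ker L)^\perp=H$, we may take $v=L^Te_i$ and $w\in H$ arbitrary and obtain
$$\langle LL^Te_i,Lw\rangle=\lambda^2\langle L^Te_i,w\rangle=\lambda^2\langle e_i,Lw\rangle .$$
Surjectivity of $L|_H$ lets $Lw$ range over all of $\mathbb{R}^n$, which gives $LL^T=\lambda^2\mathrm{Id}$.

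For the converse, assume $LL^T=\lambda^2\mathrm{Id}$. If $\lambda=0$, then $\|L^Te_i\|^2=0$ for every $i$, so $L^T=0$ and hence $L=0$. If $\lambda>0$, then $L$ is surjective. For $v=L^Ta$ and $w=L^Tb$ in $H=\operatorname{im}L^T$ we compute
$$\langle Lv,Lw\rangle=\langle LL^Ta,LL^Tb\rangle=\lambda^4\langle a,b\rangle=\lambda^2\langle LL^Ta,b\rangle=\lambda^2\langle v,w\rangle ,$$
so $L|_H$ is conformal and onto.

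There is no real obstacle in this argument. The only care needed is the identification $\operatorname{im}L^T=(\ker L)^\perp$ and keeping the degenerate case $\lambda=0$ separate from the surjective case.
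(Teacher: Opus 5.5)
The paper gives no proof of this lemma; it is quoted from Fuglede, so there is no in-paper argument to compare with. Your pointwise argument is correct and is the standard adjoint characterisation $d\varphi\,d\varphi^T=\lambda^2\,\mathrm{Id}$. You handle every step properly:
\begin{enumerate}
\item the identification $L^Te_i=\nabla\varphi_i(x)$;
\item the fact $\mathrm{im}\,L^T=(\ker L)^\perp$;
\item the use of surjectivity of $L|_H$ in the forward direction;
\item the separate treatment of $\lambda=0$.
\end{enumerate}

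Your caveat about the formulation is justified and should stay explicit. Read literally, the case $i=j$ of $\langle\nabla\varphi_i,\nabla\varphi_j\rangle=\delta_{ij}\|\nabla\varphi_i\|^2$ is a tautology. The condition then only says the gradients are pairwise orthogonal, and the ``if'' direction fails. For example, $(x,y)\mapsto(x,2y)$ on $\mtr^2$ has orthogonal gradients of lengths $1$ and $2$ but is not conformal. The correct condition is $\langle\nabla\varphi_i,\nabla\varphi_j\rangle=\lambda^2\delta_{ij}$ with $\lambda$ independent of $i$, which is what you prove. It is also the version the paper itself relies on later. In the proof of the Ou--Wood classification it deduces $A_i^2=A_j^2$ for all $i,j$, and this does not follow from the literal statement.
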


\section{Reduction theorems}\label{sec: reduc}
In order to show the reduction theorems given in the introduction we first define what it means for a map to be weakly horizontally conformal to first order at some point:
\begin{defn}\label{def: inf-whc}
Let $(M,g)$, $(N,h)$ be two Riemannian manifolds, $x\in M$. A submersive map $\varphi:M\to N$ is said to be \emph{weakly horizontally conformal to first order at $x$} if for all non-zero eigenvalues $\lambda_1,...,\lambda_n$ of the first fundamental form $\varphi^*(h)$ one has:
\begin{equation}
\lambda_1(x) = ... = \lambda_{n}(x),\qquad \nabla \lambda_1(x) =...=\nabla \lambda_{n}(x).\label{eq: infi-hor-conf}
\end{equation}
\end{defn}

This notion allows for an infinitesimal Koszul type formula, due to Gudmundsson \cite{gud-thesis}, relating the Levi-Civita connections of $M$ and $N$. We give the formula below and for completeness we include a proof.

\begin{proposition}[\cite{gud-thesis}]\label{prop: whc-connection}
Let $(M,g)$, $(N,h)$ be Riemannian manifolds, $\varphi:M\to N$ a submersive map. Let $X,Y,Z$ be vector fields on $N$ and denote their horizontal lifts to $M$ by $\widehat X,\widehat Y,\widehat Z$. Then at all points $x\in M$ at which $\varphi$ is weakly horizontally conformal to first order one has:
\begin{equation}
h(\nabla^N_{X}Y,Z)=\lambda^2\,g(\nabla^M_{\widehat X}\widehat Y,\widehat Z)+\widehat X(\ln\lambda)\,h(Y,Z)+\widehat Y(\ln\lambda)\,h(X,Z)-\widehat Z(\ln\lambda)\,h(X,Y),
\end{equation}
where $\nabla^M,\nabla^N$ denote the Levi-Civita connections on $M,N$ respectively and $\lambda$ the conformality factor of $\varphi$ at $x$.
\end{proposition}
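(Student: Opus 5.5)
The plan is to run the classical Koszul-formula argument on the horizontal lifts and correct for the failure of $\varphi$ to be a Riemannian submersion; the first-order condition \eqref{eq: infi-hor-conf} is what makes that correction come out cleanly at $x$.

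\textbf{Setup.} Since $\varphi$ is submersive, horizontal lifts are well defined. They satisfy $d\varphi(\widehat X)=X\circ\varphi$, and $g(\widehat X,\widehat Y)=\lambda^{-2}h(X,Y)\circ\varphi$ holds only where $\varphi$ is horizontally conformal. Away from $x$ this identity can fail. I would therefore work with the symmetric endomorphism $G$ of the horizontal bundle defined by $g(G\widehat X,\widehat Y)=h(X,Y)\circ\varphi$. Its eigenvalues are the nonzero eigenvalues of $\varphi^*h$. At $x$ we have $G=\lambda^2\,\mathrm{Id}$.

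\textbf{Step 1 (derivatives of $h$).} I will show that at $x$
$$\widehat X\bigl(h(Y,Z)\circ\varphi\bigr)=\widehat X\bigl(\lambda^2 g(\widehat Y,\widehat Z)\bigr).$$
Differentiating $g(G\widehat Y,\widehat Z)$ gives the right-hand side plus a term $g((\nabla_{\widehat X}G-\widehat X(\lambda^2)\mathrm{Id})\widehat Y,\widehat Z)$. This extra term is what has to vanish, and that is where \eqref{eq: infi-hor-conf} enters. At a point where all eigenvalues coincide, the derivative of the symmetric endomorphism $G$ in the direction $v$ has its diagonal part in an eigenframe equal to $\operatorname{diag}(d\lambda_i(v))$. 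By the hypothesis this diagonal part equals $d(\lambda^2)(v)\,\mathrm{Id}$.

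The off-diagonal part is the main obstacle. The condition in \eqref{eq: infi-hor-conf} has to be read as saying that $G$ agrees to first order with $\lambda^2\mathrm{Id}$, i.e. $\nabla G(x)=d\lambda^2\otimes\mathrm{Id}$. The eigenvalues themselves are not differentiable at a point of multiplicity. So I would interpret ``$\nabla\lambda_1=\dots=\nabla\lambda_n$'' as $\varphi^*h-\lambda^2 g|_{\mathcal H}=O(|\cdot-x|^2)$, and justify this by perturbation theory. Since all eigenvalues are equal at $x$ and each branch has the same derivative, every directional derivative of $G$ is a multiple of the identity. This uses that the eigenvalues of $\frac{d}{dt}G$ restricted to the degenerate eigenspace are the one-sided derivatives of the eigenvalue branches.

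\textbf{Step 2 (brackets).} Horizontal lifts are $\varphi$-related to $X,Y$, so $d\varphi[\widehat X,\widehat Y]=[X,Y]$. Hence $h([X,Y],Z)=h(d\varphi[\widehat X,\widehat Y],d\varphi\widehat Z)=\lambda^2 g([\widehat X,\widehat Y],\widehat Z)$ at $x$, because only the horizontal part of the bracket contributes.

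\textbf{Step 3 (Koszul).} Write the Koszul formula for $2h(\nabla^N_XY,Z)$ at $\varphi(x)$. Substitute Steps 1 and 2, and expand $\widehat X(\lambda^2 g(\widehat Y,\widehat Z))=\lambda^2\widehat X g(\widehat Y,\widehat Z)+2\lambda^2\widehat X(\ln\lambda)g(\widehat Y,\widehat Z)$. The $\lambda^2$-terms reassemble into $\lambda^2\cdot 2g(\nabla^M_{\widehat X}\widehat Y,\widehat Z)$ by the Koszul formula on $M$. The remaining terms are $2\lambda^2 g(\widehat Y,\widehat Z)\widehat X\ln\lambda$ plus its two permutations with signs $+,+,-$. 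Using $\lambda^2 g(\widehat Y,\widehat Z)=h(Y,Z)$ at $x$ and dividing by 2 gives the claimed formula.
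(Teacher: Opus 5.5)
Your proposal is correct and is essentially the paper's proof: Koszul's formula on $N$, with $\widehat X\bigl(h(Y,Z)\circ\varphi\bigr)$ replaced at $x$ by $\widehat X\bigl(\lambda^2 g(\widehat Y,\widehat Z)\bigr)$ via first-order conformality, the bracket terms handled through $\varphi$-relatedness, and the $\lambda^2$-terms reassembled by Koszul's formula on $M$. The one difference is in Step 1: the paper simply takes first-order conformality to mean that the one-jet of $\varphi^*h$ agrees with that of $\lambda^2 g\circ\mathcal H$, whereas you derive this from the eigenvalue condition by perturbation theory (on the fully degenerate eigenspace $\mathcal H_x$ all eigenvalues of the symmetric derivative of $G$ coincide, so it is scalar); your earlier remark about the diagonal part in an eigenframe is not needed for this and does not hold in an arbitrary orthonormal frame.
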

\begin{proof}
Suppose $\varphi$ is weakly horizontally conformal to first order at $x\in M$, meaning that the one-jet of the first fundamental form $\varphi^*(h)$ agrees with $\lambda^2\, g\circ\mathcal H$, where $\mathcal H$ denotes the orthogonal projection to the horizontal bundle (cf. Defintion \ref{def: inf-whc}). Note that $X=d\varphi(\widehat X)$, $Y=d\varphi(\widehat Y)$, $Z=d\varphi(\widehat Z)$ and then one has at $x$:
$$X(h(Y,Z))\circ\varphi =\widehat X(\varphi^*(h)\,(\widehat Y,\widehat Z))= \widehat X(\lambda^2 g(\widehat Y,\widehat Z)) = \widehat X(\lambda^2) g(\widehat Y,\widehat Z)+\lambda^2 \widehat X(g(\widehat Y,\widehat Z))$$
with similar expressions holding for other compositions. Applying the Koszul formula for $\nabla^N$ then gives:
\begin{align*}
2 h(\nabla^N_X Y,Z)=& X(h(Y,Z)) + Y(h(X,Z))- Z(h(X,Y))+h([X,Y],Z)-h([X,Z],Y)-h([Y,Z],X)\\
\begin{split}
=& \lambda^2 \left(\widehat X(g(\widehat Y,\widehat Z)) + \widehat Y(g(\widehat X,\widehat Z))-\widehat Z(g(\widehat X,\widehat Y))+g([\widehat X,\widehat Y],\widehat Z)-g([\widehat X,\widehat Z],\widehat Y])-g([\widehat Y,\widehat Z],\widehat X\right)\\
&\quad+2\widehat X(\ln\lambda) \lambda^2 g(\widehat Y,\widehat Z)+2\widehat Y(\ln\lambda)\lambda^2g(\widehat X,\widehat Z)-2\widehat Z(\ln\lambda)\lambda^2g(\widehat X,\widehat Y)
\end{split}\\
=& 2\lambda^2 g(\nabla^M_{\widehat X}\widehat Y,\widehat Z)+2\widehat X(\ln\lambda) h(Y,Z)+2\widehat Y(\ln\lambda)h(X,Z)-2\widehat Z(\ln\lambda)h(X,Y),
\end{align*}
which is the desired relation.\end{proof}

The key infinitesimal calculation for deriving the reduction statements is now the following:

\begin{lemma}\label{lemma: taup-B}
Let $(M,g)$, $(N,h)$ be Riemannian manifolds, $\varphi:M\to N$ submersive, $B\subset N$ a co-dimension $p$ submanifold. Suppose $\varphi$ is weakly horizontally conformal to first order at $x\in\varphi^{-1}(B)$, then for all $Z\in T_{\varphi(x)}N$ with $Z\perp T_{\varphi(x)}B$:
\begin{equation}
h(\frac{\tau_p(\varphi)\,(x)}{\|d\varphi\|^{p-2}},Z)=h(d\varphi(H_{\varphi^{-1}(B)}),Z)- h(\lambda^2 H_B,Z),\label{eq: taup-B}
\end{equation}
where $\lambda$ is the conformality factor of $\varphi$ at $x$ and $H_{B}, H_{\varphi^{-1}(B)}$ denote the mean curvatures of $B$, $\varphi^{-1}(B)$ at $\varphi(x)$, $x$ respectively. $\tau_p(\varphi)\,(x)$ is the $p$-tension field of $\varphi$ evaluated at $x$, recall equation~(\ref{eq: p-tension-def}).
\end{lemma}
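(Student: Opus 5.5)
The plan is to compute both sides of (\ref{eq: taup-B}) in a single orthonormal frame at $x$ adapted to $\varphi$ and to $B$, using \cref{prop: whc-connection} to convert $\nabla^N$ into $\nabla^M$. Since $\varphi$ is submersive, $F\defeq\varphi^{-1}(B)$ is a co-dimension $p$ submanifold of $M$. At $x$ we have
$$T_xF=\ker d\varphi(x)\oplus\mathcal H\bigl(d\varphi^{-1}(T_{\varphi(x)}B)\bigr),$$
so the normal space of $F$ at $x$ is the horizontal lift of the normal space of $B$. Because $d\varphi$ is conformal on horizontal vectors, it maps the normal space of $F$ onto the normal space of $B$ with factor $\lambda$.

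First I choose local orthonormal vector fields on $N$ near $\varphi(x)$: $X_1,\dots,X_{n-p}$ tangent to $B$ along $B$, and $Z_1,\dots,Z_p$ normal to $B$, extending the given $Z$ as a linear combination of the $Z_\alpha$. Their horizontal lifts $\widehat X_a,\widehat Z_\alpha$ have length $1/\lambda$ at $x$. Together with an orthonormal vertical frame $e_1,\dots,e_{m-n}$, the vectors $\lambda\widehat X_a$ and $\lambda\widehat Z_\alpha$ then give an orthonormal frame of $T_xM$ adapted to $F$. I expand $h(\tau(\varphi),Z)$ via (\ref{eq: def-tau}) in this frame and handle three groups of terms.

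\textbf{Vertical terms.} Here $d\varphi(e_j)=0$, so only $-h(d\varphi(\nabla^M_{e_j}e_j),Z)$ survives.

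\textbf{Horizontal terms tangent to $B$.} Here $\nabla^{\varphi^*}_{\lambda\widehat X_a}(\lambda X_a)=\lambda^2\nabla^N_{X_a}X_a+\lambda\widehat X_a(\lambda)X_a$, and the second summand is orthogonal to $Z$. I then apply \cref{prop: whc-connection} with $Y=X_a$. Since $h(X_a,Z)=0$, this gives
$$\lambda^2h(\nabla^N_{X_a}X_a,Z)=\lambda^2\,h\bigl(d\varphi(\nabla^M_{\widehat X_a}\widehat X_a),Z\bigr)-\lambda^2\widehat Z(\ln\lambda),$$
where $\lambda^2 g(\cdot,\widehat Z)=h(d\varphi\,\cdot,Z)$ on horizontal vectors. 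Note that $\nabla_{\lambda\widehat X_a}(\lambda\widehat X_a)$ differs from $\lambda^2\nabla_{\widehat X_a}\widehat X_a$ only by a multiple of $\widehat X_a$, which $d\varphi$ sends into $TB\perp Z$.

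\textbf{Horizontal terms normal to $B$.} These are handled the same way, but now the proposition contributes $2\widehat Z_\alpha(\ln\lambda)\,h(Z_\alpha,Z)-\widehat Z(\ln\lambda)$. Summed over $\alpha$, this gives $(2-1)\,h(d\varphi(\nabla\ln\lambda),Z)$ up to the factor $\lambda^2$ coming from $\widehat Z=\lambda^{-2}\,\mathrm{lift}$.

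Next I identify the remaining $\nabla^M$-terms. For every $v\in T_xF$, the vector $h(d\varphi(\nabla^M_v v),Z)$ only sees the $F$-normal component of $\nabla^M_vv$, because $d\varphi$ maps $T_xF$ into $T_{\varphi(x)}B\perp Z$. Summing over the vertical and tangent-horizontal frame vectors therefore produces exactly $h(d\varphi(H_F),Z)$, with the normalisation of mean curvature as the trace of the second fundamental form, and up to the paper's sign convention. Likewise the $\nabla^N_{X_a}X_a$ terms sum to $\lambda^2h(H_B,Z)$. In the normal-horizontal terms the $\nabla^M$ contributions cancel exactly between $\nabla^{\varphi^*}$ and $d\varphi(\nabla^M)$ by \cref{prop: whc-connection}, leaving only the $\ln\lambda$ terms.

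Collecting everything, the $\ln\lambda$ contributions should total $-(n-p)+p-p\cdot\ldots$. Concretely, I expect $h(\tau(\varphi),Z)=h(d\varphi H_F,Z)-\lambda^2h(H_B,Z)-(p-2)\,h(d\varphi(\nabla\ln\lambda),Z)$. Since first-order horizontal conformality gives $\|d\varphi\|^2=n\lambda^2$ to first order at $x$, we have $\nabla\ln\|d\varphi\|=\nabla\ln\lambda$ at $x$. Hence the last term is precisely cancelled by $(p-2)\,d\varphi(\nabla\ln\|d\varphi\|)$ in (\ref{eq: p-tension-def}), which yields (\ref{eq: taup-B}).

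The main obstacle is this bookkeeping step: getting the signs and the coefficient of the $\nabla\ln\lambda$ term exactly right, so that it comes out as $p-2$, consistent with the known formula $\tau=-(n-2)\,d\varphi(\nabla\ln\lambda)-(m-n)\,d\varphi(\mu^{\mathcal V})$ in the case $p=n$. A second point of care is that \cref{prop: whc-connection} only holds at $x$. Every derivative used must therefore be a first derivative of the metric data along horizontal lifts evaluated at $x$, which is exactly what first-order conformality controls; I will check that no second derivatives of $\lambda$ enter.
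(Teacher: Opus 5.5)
Your setup follows the paper: the adapted frame, the normal space of $\varphi^{-1}(B)$ as the horizontal lift of $(T_{\varphi(x)}B)^\perp$, and \cref{prop: whc-connection} in the normal directions. Your observation that $\|d\varphi\|^2=n\lambda^2$ to first order at $x$, so that $\nabla\ln\|d\varphi\|=\nabla\ln\lambda$ there, correctly supplies a step the paper leaves implicit. However, the bookkeeping you flag as the main obstacle is wrong as written, and it is the whole content of the lemma.

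In the tangent-to-$B$ directions you apply \cref{prop: whc-connection}. That identity uses up both $\lambda^2h(\nabla^N_{X_a}X_a,Z)$ and $\lambda^2h(d\varphi(\nabla^M_{\widehat X_a}\widehat X_a),Z)$. It leaves only $-\lambda^2\widehat Z(\ln\lambda)$ per index, i.e.\ $-(n-p)\,h(d\varphi(\nabla\ln\lambda),Z)$ in total. You then reuse the very same terms to produce $-\lambda^2h(H_B,Z)$ and the horizontal part of $h(d\varphi(H_{\varphi^{-1}(B)}),Z)$. That is double counting, and it is why an extra $-(n-p)$ appears in your tally.

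The normal sum is also miscounted. Since $\widehat Z=\sum_\alpha h(Z_\alpha,Z)\,\widehat Z_\alpha$ and $\sum_\alpha h(Z_\alpha,Z_\alpha)=p$, the terms $2\widehat Z_\alpha(\ln\lambda)\,h(Z_\alpha,Z)-\widehat Z(\ln\lambda)$ add up to $(2-p)\,\widehat Z(\ln\lambda)$, not $(2-1)\,\widehat Z(\ln\lambda)$. So the identity you ``expect'' is correct, but your steps do not produce it.

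The repair is the paper's bookkeeping: do not convert the tangent-to-$B$ terms at all.
\begin{enumerate}
\item Keep $\sum_a\lambda^2h(\nabla^N_{X_a}X_a,Z)=-\lambda^2h(H_B,Z)$, with the paper's sign convention.
\item Combine $-\sum_a\lambda^2h(d\varphi(\nabla^M_{\widehat X_a}\widehat X_a),Z)$ with the vertical terms to obtain $h(d\varphi(H_{\varphi^{-1}(B)}),Z)$.
\item Apply \cref{prop: whc-connection} only in the $p$ normal directions. There the $\nabla^N$ and $\nabla^M$ terms cancel, and the remainder is $\lambda^2(2-p)\,\widehat Z(\ln\lambda)=-(p-2)\,h(d\varphi(\nabla\ln\lambda),Z)$.
\end{enumerate}
Converting all horizontal directions is also legitimate. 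It yields the $Z$-component of the standard formula, with coefficient $-(n-2)$ and the fibre term $-(m-n)\,d\varphi(\mu^{\mathcal V})$. You would then have to use the summed tangent-direction identity a second time to bring $H_B$ and $H_{\varphi^{-1}(B)}$ back in, which is exactly the detour the paper avoids.
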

\begin{proof}
Let $y=\varphi(x)$, we let
\begin{enumerate}[label=$\bullet$]
\item $v_1,...,v_{m-n}$ be an orthonormal basis of the fibre $T_x \varphi^{-1}(\{y\})$.
\item $b_1,...,b_{n-p}$ an orthonormal basis of $T_yB$ and $\widehat{b}_1,...\widehat{b}_{n-p}$ their horizontal lifts to $T_x \varphi^{-1}(B)$.
\item $p_1,...,p_{p}$ an orthonormal basis of $(T_yB)^\perp$ and $\widehat{p}_1,...,\widehat{p}_{p}$ their horizontal lifts to $T_xM$.
\end{enumerate}
Note that
$$v_1,...,v_{m-n},\lambda\widehat{b}_1,...,\lambda\widehat{b}_{n-p},\lambda \widehat{p}_1,....,\lambda\widehat{p}_{p}$$
is an orthonormal basis of $T_xM$. For $Z$ orthogonal to $B$ we denote its horizontal lift by $\widehat Z$. By the definition of the tension field $\tau$ (recall equation~(\ref{eq: def-tau})):
\begin{align}
h(\tau(\varphi),Z)&=h(\sum_i\nabla^{\varphi^*}_{\lambda \widehat{b}_i} \lambda b_i+\sum_i \nabla^{\varphi^*}_{\lambda \widehat{p}_i} \lambda p_i  - d\varphi(\sum_i\nabla^M_{v_i}v_i+\sum_i \nabla^M_{\lambda\widehat{b}_i}\lambda\widehat{b_i}+\sum_i \nabla^M_{\lambda\widehat{p}_i}\lambda\widehat{p_i}),Z)\notag\\
&=h(\sum_i\nabla^{\varphi^*}_{\lambda \widehat{b}_i} \lambda b_i+\sum_i \nabla^{\varphi^*}_{\lambda \widehat{p}_i} \lambda p_i,Z)-\lambda^2g(\sum_i\nabla^M_{v_i}v_i+\sum_i \nabla^M_{\lambda\widehat{b}_i}\lambda\widehat{b_i}+\sum_i \nabla^M_{\lambda\widehat{p}_i}\lambda\widehat{p_i},\widehat Z).\label{eq: lemma-tau2}
\end{align}
Here $\nabla^{\varphi^*}$ denotes the pullback connection of $\nabla^N$ to $\varphi^*(TN)$. Note that $Z\perp b_i$ so that
$$h(\sum_i\nabla^{\varphi^*}_{\lambda\widehat{b}_i}\lambda {b}_i,Z) = \lambda^2 h(\sum_i\nabla^N_{b_i}b_i,Z)=-\lambda^2 h(H_B,Z).$$
Further since $\widehat Z$ is orthogonal to $\varphi^{-1}(B)$ one has:
$$\lambda^2 g(-\sum_i \nabla^M_{v_i}v_i - \sum_i \nabla^M_{\lambda \widehat{b}_i}\lambda\widehat{b}_i,\widehat Z)=\lambda^2 g(H_{\varphi^{-1}(B)},\widehat Z) = h(d\varphi(H_{\varphi^{-1}(B)}),Z).$$
Two terms remain unevaluated on the right-hand side of equation~(\ref{eq: lemma-tau2}). For these we note first that:
$$h(\sum_i\nabla^{\varphi^*}_{\lambda\widehat p_i}\lambda p_i, Z)-\lambda^2 g(\sum_i\nabla^M_{\lambda \widehat{p}_i}\lambda\widehat p_i, \widehat Z)= \lambda^2 h(\sum_i\nabla^N_{p_i}p_i,Z) - \lambda^2\cdot\lambda^2 g(\sum_i\nabla^M_{\widehat p_i}\widehat p_i,\widehat Z).$$
In the final calculation we apply Proposition \ref{prop: whc-connection} to get:
\begin{align*}
-\lambda^4g(\sum_i\nabla^M_{\widehat p_i}\widehat p_i, Z)&= - \lambda^2h(\sum_i \nabla_{p_i}^N p_i, Z) + 2 \sum_i\lambda^2 \widehat p_i(\ln\lambda) h(p_i,Z) - \lambda^2 \widehat Z(\ln\lambda) \sum_i h(p_i,p_i)\\
&= -\lambda^2 h(\nabla^N_{p_i}p_i,Z)-(p-2) h(d\varphi(\nabla\ln\lambda),Z).
\end{align*}
Here we used that $\lambda\widehat p_i(\lambda)h(p_i,Z) = g(\nabla\ln\lambda,\lambda\widehat p_i) \lambda^2 g(\lambda\widehat p_i,\widehat Z)$, summing this over $i$ gives $\lambda^2 g(\nabla\ln\lambda,\widehat Z)$ since $\widehat Z$ is a linear combination of the $\lambda\widehat p_i$.

Combining all terms in (\ref{eq: lemma-tau2}) then gives:
$$h(\tau(\varphi)\,(x)+(p-2)d\varphi(\nabla\ln\lambda)\,(x),Z)= h(d\varphi(H_{\varphi^{-1}(B)}),Z)-h(\lambda^2H_B,Z),$$
which is the desired identity (\ref{eq: taup-B}).
\end{proof}

We now prove \cref{thm: co-dim-2}.

\begin{proof}[Proof of \cref{thm: co-dim-2}]
The implication (i)$\implies$(ii) follows immediately from equation~(\ref{eq: taup-B}) of \cref{lemma: taup-B}. Similarly (ii)$\implies$(i) follows from \cref{lemma: taup-B} and the fact that for any $y\in N$ and any subvectorspace $V\subset T_yN$ there exists a minimal submanifold $B\subset N$ with $y\in B$ and $T_yB=V$.
\end{proof}

\begin{remark}
The (purely local) statement that any tangent space of $N$ can be realised as the tangent space of a minimal submanifold is a standard fact in Riemannian geometry, although we could not find an explicit reference for this statement. It follows from the Banach space implicit function theorem, similarly to how one shows the existence of harmonic functions with prescribed derivatives at a point, see e.g.\ Appendix A.1, Lemma~A.1.1 of \cite{baird-wood-book} or the discussion surrounding Appendix K, Theorem~45 of \cite{besse-einstein}. For convenience of the reader we provide a proof in the appendix.
\end{remark}

Observe that the following reduction type theorem is an immediate consequence of equation~(\ref{eq: taup-B}) in \cref{lemma: taup-B}:

\begin{theorem}\label{thm: reduc-infi}
Let $(M,g)$, $(N,h)$ be Riemannian manifolds, $\varphi\colon M\to N$ submersive and $B\subset N$ a submanifold of co-dimension $p$. Suppose $\varphi$ is horizontally conformal to first order at all $x\in\varphi^{-1}(B)$, then any two of the following imply the third:
\begin{enumerate}[label=(\roman*)]
\item $\tau_p(\varphi)\,(x)\in T_{\varphi(x)}B$ for all $x\in\varphi^{-1}(B)$.
\item $B$ is minimal in $N$.
\item $\varphi^{-1}(B)$ is minimal in $M$.
\end{enumerate}
\end{theorem}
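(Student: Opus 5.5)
The plan is to apply \cref{lemma: taup-B} pointwise along $\varphi^{-1}(B)$ and read each of (i)–(iii) as the vanishing of one of the three terms in equation~(\ref{eq: taup-B}). The work consists of checking that each condition really is equivalent to the vanishing of the corresponding normal component; the "any two imply the third" statement is then linear algebra.

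First I verify that $\varphi^{-1}(B)$ is a co-dimension $p$ submanifold. This holds because $\varphi$ is submersive, so $\varphi$ is transverse to $B$. Fix $x\in\varphi^{-1}(B)$ and write $y=\varphi(x)$. The normal space of $\varphi^{-1}(B)$ at $x$ is spanned by the horizontal lifts $\widehat Z$ of vectors $Z\perp T_yB$. I also use that $d\varphi(x)$ restricted to the horizontal space is a conformal isomorphism with factor $\lambda>0$; here $\lambda\neq0$ since $\varphi$ is submersive.

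Next I translate each condition into the vanishing of a normal component.
\begin{enumerate}[label=(\alph*)]
\item Condition (i) at $x$ says $h(\tau_p(\varphi)(x),Z)=0$ for all $Z\perp T_yB$. Because $\|d\varphi\|^{p-2}>0$, this is equivalent to the vanishing of the left-hand side of (\ref{eq: taup-B}) for all such $Z$.
\item Condition (ii) at $y$ says $H_B=0$. Since $H_B$ is normal to $B$, this is equivalent to $h(\lambda^2H_B,Z)=0$ for all $Z\perp T_yB$, using $\lambda^2>0$.
\item For condition (iii), $H_{\varphi^{-1}(B)}$ is normal to $\varphi^{-1}(B)$ and hence horizontal. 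Write $H_{\varphi^{-1}(B)}=\widehat W$ with $W\perp T_yB$. Then $h(d\varphi(H_{\varphi^{-1}(B)}),Z)=\lambda^2 h(W,Z)$, and this vanishes for all $Z\perp T_yB$ if and only if $W=0$, that is, if and only if $H_{\varphi^{-1}(B)}(x)=0$.
\end{enumerate}

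Given (a)–(c), equation~(\ref{eq: taup-B}) has the form $A=C-D$, where $A$, $C$, $D$ are linear functionals on $(T_yB)^\perp$ corresponding to (i), (iii), (ii) respectively. If two of them vanish, the third does too. This argument runs pointwise for each $x\in\varphi^{-1}(B)$.

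Condition (ii) is stated at points of $B$, not at points of $\varphi^{-1}(B)$. It therefore has to be checked at every $y\in B$, which requires $y=\varphi(x)$ for some $x\in\varphi^{-1}(B)$. Points of $B$ outside $\varphi(M)$ impose no condition from the other two hypotheses, so I interpret the statement for $B\subset\varphi(M)$, or equivalently I take (ii) to mean that $B$ is minimal at the points of $\varphi(\varphi^{-1}(B))$. The only other point needing care is step (c), namely that the mean curvature vector of $\varphi^{-1}(B)$ is horizontal and is detected by pairing with the lifts $\widehat Z$. This holds because the normal bundle of $\varphi^{-1}(B)$ is exactly the set of these lifts, and the vertical vectors are tangent to $\varphi^{-1}(B)$.
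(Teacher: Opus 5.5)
Your proposal is correct and follows the paper's route: the paper presents \cref{thm: reduc-infi} as an immediate consequence of equation~(\ref{eq: taup-B}) in \cref{lemma: taup-B}, reading (i), (ii), (iii) as the vanishing of its three normal terms. Your steps (a)--(c) spell out what it leaves implicit, namely that the normal space of $\varphi^{-1}(B)$ consists of the horizontal lifts of $(T_yB)^\perp$ and that $\lambda$ and $\|d\varphi\|$ are positive. Your caveat is a legitimate refinement the paper omits: (i) and (iii) only force $H_B=0$ on $B\cap\varphi(M)$. In (c) the factor $\lambda^2$ should be dropped, since $d\varphi(\widehat W)=W$, but it is harmless.
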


And now \cref{thm: co-dim-p} follows from the definition of $\tau_p$:

\begin{proof}[Proof of \cref{thm: co-dim-p}]
Recall that $\tau_p(\varphi)=\|d\varphi\|^{p-2}(\tau(\varphi)+(p-2)d\varphi(\nabla\ln\|d\varphi\|))$. If $\varphi$ is a harmonic morphism then $\tau(\varphi)=0$ and $\tau_p(\varphi)$ is proportional to $d\varphi(\nabla\|d\varphi\|^2)$ for $p\neq2$. \cref{thm: co-dim-p} then follows from \cref{thm: reduc-infi}.
\end{proof}

\section{Examples: area-minimising cones of $\mtr^m$}\label{sec: area-min}

In this section we will apply the results of \cref{sec: reduc} in order to generate interesting examples of minimal submanifolds. The harmonic morphisms we will consider for this will be homogeneous polynomials of degree $2$, classified by Ou and Wood \cite{ou-wood-96,ou-97}. The data determining these maps is essentially given by so-called \emph{Clifford systems}. The relevant class of quadratic polynomial harmonic morphisms which, in arbitrary co-dimension, pull back minimal cones to minimal cones are the \emph{umbillic} ones. Pulling back $P^{-1}(V)$ for $V\subset\mtr^n$ a subvectorspace by an umbillic quadratic harmonic morphism $P\colon\mtr^m\to\mtr^n$ recovers classical families of minimal submanifolds, namely cones over the focal varieties of FKM isoparametric foliations of $S^{m-1}$ \cite{FKM}.\medbreak

While we develop the remarks of the above paragraph in \cref{sec: umbillic}, the main part will be the construction of a novel family of area-minimising co-dimension $1$ cones in $\mtr^m$ in \cref{sec: minimal-cone}. These are built by pulling back the Simons cone
$$S_n\defeq\{ (x,y)\in\mtr^{2n}\mid \|x\|^2=\|y\|^2\}$$
by an umbilic quadratic harmonic morphism $P\colon\mtr^m\to\mtr^{2n}$. We show that if $m\geq32$ and $2n\geq8$ these singular cones are area-minimisers, cf.\ \cref{thm: minimal-Scone}.

\subsection{Umbillic polynomial harmonic morphisms}\label{sec: umbillic}

\begin{defn}
We say that a homogeneous polynomial harmonic morphism $P\colon\mtr^m\to\mtr^n$ is \emph{umbillic} if there exists a function $\eta\colon\mtr^m\to\mtr$ so that
$$dP(\nabla \|dP\|^2\,(x))=\eta(x)\,P(x)$$
for all $x\in\mtr^m$.
\end{defn}

An immediate application of \cref{thm: co-dim-p}, and our motivation for this definition, is that an umbillic harmonic morphism pulls back minimal cones to minimal cones:
\begin{corollary}\label{cor: umbillic}
Let $P\colon\mtr^m\to\mtr^n$ be an umbillic homogeneous polynomial harmonic morphism.
\begin{enumerate}
\item For any minimal conical submanifold $C\subset\mtr^n\setminus\{0\}$ the pre-image $P^{-1}(C)\subset\mtr^m\setminus\{0\}$ is a minimal conical submanifold.
\item For any minimal submanifold $B\subset S^{n-1}$ of $S^{n-1}$ the pullback $P^{-1}( \mtr_{>0}\cdot B)\cap S^{m-1}$ is a minimal submanifold of $S^{m-1}$.
\end{enumerate}
\end{corollary}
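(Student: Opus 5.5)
Part 1 follows from \cref{thm: co-dim-p} applied on the regular locus, and part 2 is then obtained from part 1 by the standard correspondence between minimal cones and minimal submanifolds of the sphere.

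\emph{Regular locus.} Since $P$ is a homogeneous polynomial harmonic morphism of degree $d\geq1$, it is weakly horizontally conformal with conformality factor $\lambda^2=\|dP\|^2/n$. Its critical set is $\{dP=0\}$. Euler's relation $dP(x)\,x=d\,P(x)$ shows that $P(x)\neq0$ forces $dP(x)\neq0$. Hence every point of $P^{-1}(C)$, with $C\subset\mtr^n\setminus\{0\}$, is a regular point. Thus $P$ restricted to the open set $U=P^{-1}(\mtr^n\setminus\{0\})$ is a submersive harmonic morphism $U\to\mtr^n\setminus\{0\}$.

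\emph{Tangency condition.} A conical submanifold $C$ contains the ray through each of its points, so $P(x)\in T_{P(x)}C$ for all $x\in P^{-1}(C)$. Umbillicity gives $dP(\nabla\|dP\|^2(x))=\eta(x)P(x)$, which is therefore tangent to $C$. If the co-dimension $p$ of $C$ differs from $2$, \cref{thm: co-dim-p} applies directly and shows $P^{-1}(C)$ is minimal. If $p=2$, \cref{thm: co-dim-2} (or \cref{lemma: taup-B} with $\tau=0$) gives minimality of $P^{-1}(C)$ for every minimal $C$, with no tangency condition needed. Since $P(tx)=t^dP(x)$, the set $P^{-1}(C)$ is invariant under positive dilations and so is conical. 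This proves part 1.

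\emph{Part 2.} A submanifold $B\subset S^{n-1}$ is minimal if and only if the cone $\mtr_{>0}\cdot B$ is minimal in $\mtr^n$. This is the standard relation: the mean curvature of the cone at $tb$ equals $t^{-1}$ times the mean curvature of $B$ in the sphere at $b$, because the radial direction is a geodesic ruling lying in the cone. Applying part 1 to $C=\mtr_{>0}\cdot B$ shows $P^{-1}(C)$ is a minimal cone. By the same correspondence, now in $\mtr^m$, its link $P^{-1}(C)\cap S^{m-1}$ is minimal in $S^{m-1}$. The intersection is transversal since $P^{-1}(C)$ contains the radial direction, so the link is a submanifold.

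\emph{Main obstacle.} The only delicate points are bookkeeping. One must check that \cref{thm: co-dim-p} only needs $P$ to be submersive near $P^{-1}(C)$, which holds by restricting to the open set $U$. One must also separate out the co-dimension $2$ case.
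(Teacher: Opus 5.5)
Your proposal is correct and follows the same route as the paper: umbilicity makes $dP(\nabla\|dP\|^2)=\eta P$ radial and hence tangent to any cone, so \cref{thm: co-dim-p} gives part 1, and the cone--link correspondence for minimality gives part 2. Your extra bookkeeping fills in details the paper's two-line proof leaves implicit: Euler's relation shows that $P^{-1}(C)$ avoids the critical set, you check dilation invariance of $P^{-1}(C)$, and you treat co-dimension $2$ separately via \cref{thm: co-dim-2}, since \cref{thm: co-dim-p} excludes it.
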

\begin{proof}
Point 1 follows from \cref{thm: co-dim-p} by noting that $dP(\nabla\|P\|^2(x))=\eta(x)\cdot P(x)$ is proportional to the radial vector, which is always tangent to any cone. Point 2 follows from noting that a submanifold $B\subset S^{n-1}$ is minimal in $S^{n-1}$ if and only if the cone $\mtr_{>0}\cdot B$ is minimal in $\mtr^n$. 
\end{proof}
\begin{remark}
The submanifold $P^{-1}( \mtr_{>0}\cdot B)\cap S^{m-1}$ of point 2. in \cref{cor: umbillic} is not necessarily closed, even if $B$ is closed.
\end{remark}

We wish to apply \cref{cor: umbillic} in order to generate interesting examples of minimal cones. In what follows we will briefly describe the classification of umbillic polynomial harmonic morphisms of degree $2$. For completeness we include a short proof.

\begin{defn}\label{def: clifford-system}
A tuple $(A_1,...,A_n)$ of symmetric real $(m\times m)$ matrices is called a \emph{Clifford system on $\mtr^m$} if
$$A_iA_j+A_jA_i=2\delta_{ij}\mathds1_{m\times m}$$
for all $i,j\in\{1,...,n\}$.
\end{defn}
\begin{remark}
Clifford systems have been classified \cite{atiyah-bott-shapiro-64,FKM}. For a given $n$ each Clifford system on $\mtr^n$ is a direct sum $(\bigoplus_\ell A_1^{(\ell)},...,\bigoplus_\ell A_n^{(\ell)})$ of \emph{irreducible} Clifford systems. If $n\not\equiv 1$ mod $4$ then there exists exactly one irreducible Clifford system (up to conjugation with an isometry of $\mtr^m$). If $n\equiv1$ mod $4$ there are exactly two, and they have the same dimension.
\end{remark}

In \cref{table: clifford} below we list the dimensions $\underline{m}(n)$ of the irreducible Clifford systems on $\mtr^n$.
\begin{table}[!h]
\begin{center}
\begin{tabular}{c | c | c | c | c | c | c| c | c | c | c}
$n$ & 1 & 2 & 3 & 4 & 5 & 6 & 7 & 8 & $\cdots$ & $n+8$\\
\hline
$\underline{m}(n)$ & 1 & 2 & 4 & 8 & 8 & 16& 16 & 16& $\cdots$ &$16\,\underline{m}(n)$
\end{tabular}
\caption{Dimensions of irreducible Clifford systems. In the literature the usual convention is to tabulate instead the related quantity $\delta(n)=\frac12 \underline{m}(n+1)$.}\label{table: clifford}
\end{center}
\end{table}

\begin{theorem}[\cite{ou-wood-96,ou-97}]\label{thm: deg2}
Let $P:\mtr^m\to\mtr^n$ be a homogeneous polynomial harmonic morphism of degree $2$. Then $P$ is a weighted sum of Clifford systems. That is up to an isometry of the domain:
$$\mtr^m=\bigoplus_{\ell=1}^k \mtr^{m_\ell}\oplus\mtr^{m_0},\qquad P((x_1,...,x_k),x_0)=\sum_\ell\lambda_\ell \left(\langle x_\ell, A^{(\ell)}_1 x_\ell\rangle ,...,\langle x_\ell, A^{(\ell)}_nx_\ell\rangle\right)$$
for Clifford systems $(A_1^{(\ell)},...,A_n^{(\ell)})$ on $\mtr^{m_\ell}$ and $\lambda_\ell>0$. $P$ is umbillic if and only if all weights $\lambda_\ell$ agree.
\end{theorem}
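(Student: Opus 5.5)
The plan is to write the quadratic map as $P=(P_1,\dots,P_n)$ with $P_i(x)=\langle x,S_ix\rangle$ for symmetric matrices $S_i$, and turn weak horizontal conformality into algebraic identities on the $S_i$ via \cref{lemma: whc-euc}. I assume $n\geq2$ throughout. For $n=1$ weak horizontal conformality is vacuous and only $\Tr S_1=0$ remains, which does not force the stated form.

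\textbf{Step 1: the $S_i$ anticommute and have a common square.} Since $\nabla P_i(x)=2S_ix$, \cref{lemma: whc-euc} gives $\langle S_ix,S_jx\rangle=0$ for $i\neq j$ and $\|S_ix\|^2=\|S_jx\|^2$ for all $x$ and all $i,j$. Polarising these quadratic identities yields
$$S_iS_j+S_jS_i=0\quad (i\neq j),\qquad S_i^2=S_j^2=:Q.$$
The matrix $Q$ is symmetric and positive semidefinite. It commutes with every $S_i$: picking $j\neq i$ (possible since $n\geq2$), anticommuting twice gives $S_iS_j^2=S_j^2S_i$. Harmonicity needs no separate check, by \cref{thm: poly-hm}(2).

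\textbf{Step 2: decompose along the eigenspaces of $Q$.} Split $\mtr^m=\mtr^{m_0}\oplus\bigoplus_\ell\mtr^{m_\ell}$ into the eigenspaces of $Q$, with $\mtr^{m_0}=\ker Q$ and distinct eigenvalues $\mu_\ell^2>0$ on $\mtr^{m_\ell}$. Each $S_i$ preserves every eigenspace because it commutes with $Q$. On $\ker Q$ we have $S_i^2=0$, and a symmetric matrix with zero square vanishes, so $S_i=0$ there. On $\mtr^{m_\ell}$ set $A^{(\ell)}_i=S_i/\mu_\ell$. Then $(A^{(\ell)}_i)^2=\mathds1$ and the anticommutation relations hold, so these form a Clifford system. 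This gives the stated form with $\lambda_\ell=\mu_\ell$. Distinct eigenvalues give distinct weights, and blocks with equal weights may be merged.

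\textbf{Step 3: reduce umbilicity to a weight condition.} First compute
$$\|dP\|^2=4\sum_i\|S_ix\|^2=4n\langle x,Qx\rangle,\qquad \nabla\|dP\|^2=8nQx.$$
Since $dP(v)_i=2\langle S_ix,v\rangle$, we get
$$dP(\nabla\|dP\|^2)_i=16n\langle x,S_iQx\rangle=16n\sum_\ell\lambda_\ell^3\langle x_\ell,A^{(\ell)}_ix_\ell\rangle.$$
Compare this with $P_i=\sum_\ell\lambda_\ell\langle x_\ell,A^{(\ell)}_ix_\ell\rangle$. If all $\lambda_\ell$ equal a common $\lambda$, then $P$ is umbillic with $\eta=16n\lambda^2$.

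\textbf{Step 4: the converse, the only non-routine step.} Suppose two blocks $\ell\neq\ell'$ have $\lambda_\ell\neq\lambda_{\ell'}$. I will build an $x$ for which $P(x)=0$ but $dP(\nabla\|dP\|^2)(x)\neq0$, which rules out any $\eta$.
\begin{enumerate}
\item Take $x=x_\ell+x_{\ell'}$, where $x_\ell,x_{\ell'}$ are eigenvectors of $A^{(\ell)}_1,A^{(\ell')}_1$ with eigenvalue $1$ (or $-1$).
\item Each $A_j$ with $j\neq1$ anticommutes with $A_1$, so it maps the $\pm1$-eigenspace of $A_1$ to the $\mp1$-eigenspace. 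Hence $\langle x,A_jx\rangle=0$ blockwise for $j\neq1$, and only the first coordinates of $P$ and of $dP(\nabla\|dP\|^2)$ can be nonzero.
\item Scale and choose signs so that $\langle x_\ell,A_1x_\ell\rangle=\lambda_{\ell'}$ and $\langle x_{\ell'},A_1x_{\ell'}\rangle=-\lambda_\ell$.
\item Then $P_1(x)=\lambda_\ell\lambda_{\ell'}-\lambda_{\ell'}\lambda_\ell=0$, while the first coordinate of $dP(\nabla\|dP\|^2)(x)$ is proportional to $\lambda_\ell\lambda_{\ell'}(\lambda_\ell^2-\lambda_{\ell'}^2)\neq0$.
\end{enumerate}
This contradicts $dP(\nabla\|dP\|^2)(x)=\eta(x)P(x)$, so umbillic maps have all weights equal.
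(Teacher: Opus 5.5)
Your proof is correct and follows essentially the paper's route: polarise the conformality conditions to get $S_iS_j+S_jS_i=2\delta_{ij}Q$, split along the eigenspaces of the common square $Q$, and compare the $\lambda_\ell^3$- and $\lambda_\ell$-weighted components. Your Step 4 sharpens the paper's one-line converse (``the quotient by $P_i$ is independent of $i$ iff the $\lambda_\ell$ agree''), since a point with $P(x)=0$ but $dP(\nabla\|dP\|^2)(x)\neq0$ excludes every $\eta$ cleanly.

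One correction to your side remark about $n=1$. The matrix $Q=S_i^2$ commutes with $S_i$ trivially, so your Step 2 works verbatim for $n=1$, and the stated form \emph{does} hold there: on $E_\mu(S_1)\oplus E_{-\mu}(S_1)$ the map $S_1$ is $\mu$ times a symmetric involution. The hypothesis $n\geq2$ is only needed in Step 4, to guarantee both eigenvalues $\pm1$ in every block. Even for $n=1$ that step survives if you pick the two blocks so that one carries a positive and the other a negative eigenvalue of $S_1$, which $\Tr S_1=0$ allows.
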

\begin{proof}
Since $P$ is a homogeneous quadratic polynomial there are symmetric matrices $A_1,...,A_n\in M_{m\times m}(\mtr)$ so that
$$P(x)=(\langle x,A_1x\rangle,...,\langle x,A_nx\rangle).$$
If $P_i$ denotes the $i$-th component of $P$, then the condition that $P$ is weakly horizontally conformal means $\langle \nabla P_i, \nabla P_j\rangle = \delta_{ij}\|\nabla_iP\|^2$, recall \cref{lemma: whc-euc}. Since the $A_i$ are symmetric, this means
$$\langle x, \frac12(A_iA_j+A_jA_i)x\rangle=\langle A_ix,A_jx\rangle=\delta_{ij}\langle x, A_i^2x\rangle$$
for all $x\in\mtr^m$, i.e.
\begin{equation}
A_iA_j+A_jA_i=2\delta_{ij}\,A_i^2\label{eq: hom2-sq}
\end{equation}
for all $i,j\in\{1,...,n\}$. Let $\lambda_1^2,...,\lambda_k^2$ be the non-zero eigenvalues of $A_i^2$ and $E_1,..,E_k$ the associated eigenspaces. Note that this decomposition does not depend on $i$ since $A_i^2=A_j^2$ for all $i,j$. Then each $A_i$ decomposes as a direct sum $A_i=\bigoplus_{\ell}\lambda_\ell A_i^{(\ell)}$, where $A_i^{(\ell)}$ are symmetric endomorphisms of $E_\ell$. Equation~(\ref{eq: hom2-sq}) then becomes
$$A_i^{(\ell)}A_j^{(\ell)}+A_j^{(\ell)}A_{i}^{(\ell)}=2\delta_{ij}\,\mathrm{id}_{E_\ell},$$
from which the first part of the theorem follows. The second part follows from
$$dP_i(\nabla \|\nabla P_i\|^2)\,(x)=8\sum_\ell \lambda_{\ell} \langle x, (A_i^{(\ell)})^3x\rangle= 8\sum_\ell \lambda_{\ell}^3 \langle x, A_i^{(\ell)}x\rangle.$$
Taking the quotient by $P_i(x)= \sum_\ell \lambda_{\ell} \langle x, A_i^{(\ell)}x\rangle$ gives a function independent of $i$ if and only if the $\lambda_\ell$ all agree.
\end{proof}

\begin{example}Consider the $4\times 4$ matrices:
$$
B_1=\begin{pmatrix}0&-1&0&0\\1&0&0&0\\0&0&0&1\\0&0&-1&0\end{pmatrix},\quad 
B_2=\begin{pmatrix}0&0&-1&0\\0&0&0&1\\1&0&0&0\\0&-1&0&0\end{pmatrix},\quad
B_3=\begin{pmatrix}0&0&0&-1\\0&0&-1&0\\0&1&0&0\\1&0&0&0\end{pmatrix}.$$
\end{example}
Then the $8\times8$ matrices
$$
\begin{pmatrix}0&\mathds1_{4\times4}\\ \mathds1_{4\times 4}&0\end{pmatrix},\quad
\begin{pmatrix}0&B_1\\ -B_1&0\end{pmatrix},\quad
\begin{pmatrix}0&B_2\\ -B_2&0\end{pmatrix},\quad
\begin{pmatrix}0&B_3\\ -B_3&0\end{pmatrix},$$
form a Clifford system on $\mtr^8$. The associated quadratic harmonic morphism $\mtr^8\to\mtr^4$ is, up to an isometric identification of domain and co-domain, the multiplication $\mth\oplus\mth\to\mth$, $(x,y)\mapsto 2x\cdot_\mth y$ of two quaternions.

\subsection{Minimal submanifolds from quadratic harmonic morphisms}\label{sec: minimal-cone}
We briefly describe how pulling back subvectorspaces $V\subset\mtr^n$ by umbillic quadratic harmonic morphisms recovers classical minimal cones. For simplicity we assume the harmonic morphism is full:
\begin{defn}
A map $f:\mtr^m\to\mtr^n$ is called \emph{full} if
$$\spn\{(\nabla \langle f, v\rangle)\,(x)\mid x\in\mtr^m,v\in\mtr^n\}=\mtr^m,$$
i.e.\ $f$ does not factor over any subvectorspace of $\mtr^m$.
\end{defn}

\begin{proposition}
Let $V\subset\mtr^n$ be a subvectorspace of dimension $k$ and $P:\mtr^m\to\mtr^n$ a full umbillic quadratic harmonic morphism. Then $P^{-1}(V)\cap S^{m-1}$ is a closed minimal submanifold of $S^{m-1}$ of co-dimension $n-k$.
\end{proposition}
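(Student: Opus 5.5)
The plan is to apply \cref{cor: umbillic} with $C=V\setminus\{0\}$ and then control the singular set. By \cref{thm: deg2} and fullness, $P(x)=\lambda(\langle x,A_1x\rangle,\dots,\langle x,A_nx\rangle)$ for a single Clifford system $A_1,\dots,A_n$ on all of $\mtr^m$; fullness rules out a summand $\mtr^{m_0}$, and umbillicity forces all weights to agree. After rescaling we take $\lambda=1$.

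The key computation is the differential of $P$. We have $\nabla P_i(x)=2A_ix$, and the Clifford relations give $\langle A_ix,A_jx\rangle=\delta_{ij}\|x\|^2$. Hence $dP(x)$ has rank $n$ for every $x\neq0$, so $P$ is submersive on $\mtr^m\setminus\{0\}$ with conformality factor $\lambda=2\|x\|$. In particular there are no critical points away from the origin. This is the only real obstacle: one has to check that the preimage is an honest embedded submanifold everywhere, not only on some regular locus. Here it comes for free from the Clifford identities.

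Next, $V\setminus\{0\}$ is a minimal cone in $\mtr^n\setminus\{0\}$. Because $P$ is a submersion there, $P^{-1}(V\setminus\{0\})=P^{-1}(V)\setminus\{0\}$ is an embedded conical submanifold of codimension $n-k$. One caveat concerns $P^{-1}(0)\setminus\{0\}$, which also lies in $P^{-1}(V)$. Around those points the submersion argument still applies, since $dP$ is surjective there and $V$ is a submanifold through $0\in\mtr^n$. So we should apply \cref{thm: co-dim-p} directly to $B=V$ with the submersion $P|_{\mtr^m\setminus\{0\}}$, rather than to the punctured cone. Condition (i) of \cref{thm: co-dim-p} holds because umbillicity gives $dP(\nabla\|dP\|^2)=\eta P\in V$. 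Concretely, \cref{thm: deg2} shows this equals a constant multiple of $\|x\|^2P(x)$, which is smooth and vanishes where $P$ does. Therefore $P^{-1}(V)\setminus\{0\}$ is a minimal conical submanifold of $\mtr^m$ of codimension $n-k$. The case $p=2$ needs no tangency condition at all, since $P$ is a harmonic morphism and \cref{thm: co-dim-2} applies.

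Finally we pass to the sphere. $P^{-1}(V)\setminus\{0\}$ is invariant under positive dilations, so it equals $\mtr_{>0}\cdot\Sigma$ with $\Sigma=P^{-1}(V)\cap S^{m-1}$. A cone is minimal in $\mtr^m$ if and only if its link is minimal in $S^{m-1}$, as in point~2 of \cref{cor: umbillic}. Thus $\Sigma$ is a minimal submanifold of $S^{m-1}$ of codimension $n-k$. Moreover $\Sigma$ is the intersection of the closed set $P^{-1}(V)$ with the compact sphere, and it is smooth everywhere because $S^{m-1}$ avoids the origin. Hence $\Sigma$ is a closed, compact and boundaryless, minimal submanifold.
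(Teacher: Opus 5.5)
Your proof is correct and follows the paper's argument: apply \cref{thm: co-dim-p} directly to $B=V$ with umbilicity supplying the tangency condition, use fullness and the Clifford relations to see that the origin is the only critical point, and then pass to the link in $S^{m-1}$. Your explicit treatment of the case $n-k=2$ via \cref{thm: co-dim-2} and of the points of $P^{-1}(0)\setminus\{0\}$ is, if anything, more careful than the paper's.

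There are two small slips, neither of which affects the argument. First, after normalising the weight one has $\|dP\|^2=4n\|x\|^2$, so $dP(\nabla\|dP\|^2)=dP(8n\,x)=16n\,P(x)$ is a constant multiple of $P(x)$ itself, not of $\|x\|^2P(x)$. Second, the identity $P^{-1}(V\setminus\{0\})=P^{-1}(V)\setminus\{0\}$ that you write down in passing is false in general, although you correct for it immediately afterwards.
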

\begin{proof}
$P^{-1}(V)\setminus\{ x\in\mtr^m\mid \|dP\|^2(x)=0\}$ is a minimal submanifold by \cref{thm: co-dim-p}. Since $P$ is full one has $\|dP\|^2(x)=0$ if and only if $x=0$ from \cref{thm: deg2} (recall that the $A_i$ are invertible). Then $P^{-1}(V)\setminus\{0\}$ is a minimal conical submanifold with closure $P^{-1}(V)$. It follows that $P^{-1}(V)\cap S^{m-1}$ is a closed minimal submanifold.
\end{proof}

\begin{remark}Note that if $n\geq2$ then $m$ is always even by \cref{table: clifford}. We assume this to be the case, then:
\begin{enumerate}
\item If $\dim(V)=m-1$ then $P^{-1}(V)$ is congruent to the Simons cone $S_{m/2}=\{(x,y)\in\mtr^{m/2}\oplus\mtr^{m/2}\mid \|x\|^2=\|y\|^2\}$. If $m\geq8$ then $S_{m/2}$ is an area-minimising cone in $\mtr^m$ \cite{bgg-69}.
\item If $\dim(V)=m-2$ then $P^{-1}(V)$ is congruent to the complex quadric, that is the cone $\mtr_{>0}\cdot V_2(\mtr^{m/2})$ over the Stiefel-manifold of two frames $V_2(\mtr^{m/2})=\{(x,y)\in S^{m/2-1}\times S^{m/2-1}\mid \langle x,y\rangle = 0\}$. This is always an area-minimising cone in $\mtr^m$, since it is calibrated by a power of the K\"ahler form. See also \cite{jxc-24} for a proof using Lawlor's curvature criterion \cite{lawlor-91}.
\item More generally $P^{-1}(V)$ is the cone $\mtr_{>0}\cdot M_+$ over a focal manifold of an isoparametric foliation of $S^{m-1}$ of FKM-type \cite{FKM}.
\end{enumerate}
\end{remark}

We now give the main theorem of this section.

\begin{theorem}\label{thm: minimal-Scone}
Let $m\geq32$ and $n\geq4$. For $A_1,...,A_{2n}\in M_{m\times m}(\mtr)$ a Clifford system the cone
$$C^4_{m,n}\defeq\left\{x\in\mtr^m\ \middle|\ \sum_{i=1}^n \langle x,A_ix\rangle^2=\sum_{i=1}^n \langle x,A_{n+i}x\rangle^2\right\}$$
is an area-minimising hypersurface in $\mtr^m$.
\end{theorem}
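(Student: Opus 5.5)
The minimality of $C^4_{m,n}$ at its regular points comes directly from the reduction theorems. The area-minimising property I would prove by a sub-calibration argument (a vector field whose divergence has the right sign on each side of the cone), reduced to a computation on $\mtr^{2n}$ through the harmonic morphism structure.

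\emph{Step 1 (minimality).} Set $P(x)=(\langle x,A_1x\rangle,\dots,\langle x,A_{2n}x\rangle)$. By \cref{thm: deg2}, $P\colon\mtr^m\to\mtr^{2n}$ is a quadratic harmonic morphism given by a single Clifford system, so all weights agree and $P$ is umbillic. Hence $dP(\nabla\|dP\|^2)$ is proportional to $P(x)$, which is radial and therefore tangent to the Simons cone $S_n=\{(u,v)\in\mtr^n\oplus\mtr^n\mid \|u\|^2=\|v\|^2\}$. Since $C^4_{m,n}=P^{-1}(S_n)$, \cref{cor: umbillic} shows that $C^4_{m,n}$ is minimal away from $P^{-1}(0)$. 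The Clifford relations give $\|dP\|^2(x)=8n\|x\|^2$ and the conformality factor $\lambda(x)=2\|x\|$, so $P$ is submersive on $\mtr^m\setminus\{0\}$. The singular set of $C^4_{m,n}$ is therefore contained in the cone $P^{-1}(0)$.

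\emph{Step 2 (reduction to a calculation on $\mtr^{2n}$).} I would use the criterion of Davini and De~Philippis--Paolini. It suffices to find, on each component $E_\pm$ of $\mtr^m\setminus C^4_{m,n}$, a unit vector field $\nu$ that is smooth away from a small closed set, extends the unit normal of the cone, and satisfies $\operatorname{div}\nu\ge0$ on $E_+$ and $\operatorname{div}\nu\le0$ on $E_-$ (or the reverse). I take $\nu=\nabla F/\|\nabla F\|$ with $F=\Phi\circ P$. Here $\Phi(u,v)=(\|u\|^2-\|v\|^2)\,g(\|u\|^2/\|v\|^2)$, or more generally a homogeneous function on $\mtr^{2n}$ vanishing exactly on $S_n$, of the type used for the Simons cone.

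Because $P$ is horizontally conformal, $\nabla F$ is the horizontal lift of $\nabla\Phi$ and $\|\nabla F\|=\lambda\|\nabla\Phi\|\circ P$. Also $\Delta F=\lambda^2(\Delta\Phi)\circ P$, since $P$ is a harmonic morphism. The level sets of $F$ are $P^{-1}(\{\Phi=c\})$, and \cref{lemma: taup-B} with $p=1$ gives their mean curvature:
\[
d P(H_{P^{-1}(\{\Phi=c\})})^\perp=\lambda^2 H_{\{\Phi=c\}}-\tfrac{1}{2}\,d P(\nabla\ln\lambda^2)^\perp .
\]
The last term is $-\tfrac{1}{2\|x\|^2}$ times the normal component of $P(x)$, by umbillicity. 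Thus $\operatorname{div}\nu$ on $\mtr^m$ equals, up to the positive factor $\lambda$, the mean curvature of the level sets of $\Phi$ in $\mtr^{2n}$ corrected by a radial term. That radial term involves $\|x\|^2$, which is not a function of $P(x)$ alone.

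\emph{Step 3 (the inequality, main obstacle).} Using Cauchy--Schwarz, $\|P(x)\|\le\|x\|^2$, together with the homogeneity of $\Phi$, the sign condition reduces to an inequality in the two variables $s=\|u\|^2/\|v\|^2$ and $t=\|P(x)\|/\|x\|^2\in[0,1]$. Its coefficients depend on $n$, and the dimension $m$ enters through the weight of the correction term. I expect $m\ge32$ and $n\ge4$ to be exactly the range in which a suitable $g$ can be chosen so that the inequality holds uniformly in $t$.

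The first choice I would try is a power weight $g(s)=s^{a}$, symmetrised between $u$ and $v$. I would pick the exponent $a$ so that the leading-order term near the cone (a Jacobi-type operator condition) has the correct sign. This is analogous to the choice of exponent in the Bombieri--De~Giorgi--Giusti argument, with effective dimension shifted by the correction term. The delicate part is verifying the sign away from the cone and near $P^{-1}(0)$, where $t\to0$. If the bound at $t\to0$ fails, I would fall back on Lawlor's curvature criterion applied to the link $C^4_{m,n}\cap S^{m-1}$, using the second fundamental form obtained from the same reduction.

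Finally, $P^{-1}(0)$ has codimension $2n\ge8$ and hence zero $(m-1)$-dimensional capacity. The divergence argument therefore extends across it, and this completes the proof that $C^4_{m,n}$ is area-minimising.
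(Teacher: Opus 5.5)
\paragraph{Verdict: the proposal has a genuine gap.} Your Step~1 is correct, and your overall framework matches the paper: sub-calibrations in the sense of De~Philippis--Paolini on both sides, with the codimension-$2n$ set $P^{-1}(0)$ discarded. The gap is Step~3, which carries the whole content of the theorem. You only expect the inequality to hold, and with the ansatz $F=\Phi\circ P$ it fails for $n=4$ whatever $\Phi$ you take.

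\paragraph{Why $F=\Phi\circ P$ cannot work for $n=4$.} Write $\nu_0=\nabla\Phi/\|\nabla\Phi\|$ and $\mathcal M=\mathrm{div}\,\nu_0$ on $\mtr^{2n}$. The identities $\nabla F=\lambda^2\widehat{\nabla\Phi}$, $\Delta F=\lambda^2(\Delta\Phi)\circ P$, $\lambda=2\|x\|$ and $dP_x(x)=2P(x)$ give exactly
\[
\mathrm{div}\,\nu\,(x)=2\|x\|\,\mathcal M(P(x))-\frac{\langle P(x),\nu_0(P(x))\rangle}{\|x\|^{3}} .
\]
So the radial correction is $-\frac{2}{\|x\|^2}P(x)^\perp$, not $-\frac{1}{2\|x\|^2}P(x)^\perp$. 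Moreover, contrary to your remark, $m$ does not enter at all.

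The value $t=1$ is attained over every point of $\mtr^{2n}$. Indeed, for any unit $c\in\mtr^{2n}$ the matrix $A_c=\sum_i c_iA_i$ is a symmetric involution, and $P(x)=\|x\|^2c$ for every $x$ in its $(+1)$-eigenspace. At such points you need
\[
\mathcal M(y)\le\frac{\langle y,\nu_0(y)\rangle}{2\|y\|^2}\qquad\text{on }\{\Phi<0\},
\]
which is strictly stronger than the De~Philippis--Paolini condition $\mathcal M\le0$.

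Now linearise at the Simons cone. Write the leaves $\{\Phi=-\epsilon\}$ as normal graphs $\epsilon w$, so that $\mathcal M\approx\epsilon(\Delta_{S_n}w+\frac{2n-2}{r^2}w)$ and $\langle y,\nu_0\rangle\approx\epsilon(rw_r-w)$. Averaging over the link and setting $v(\sigma)=\bar w(e^\sigma)$ yields
\[
v''+\Bigl(2n-\frac72\Bigr)v'+\Bigl(2n-\frac32\Bigr)v\le0,\qquad v>0,\quad \sigma\in(-\infty,0).
\]
For $n=4$ the characteristic polynomial $\gamma^2+\frac92\gamma+\frac{13}{2}$ has negative discriminant. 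By Sturm comparison no such $v$ exists.

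Hence your route cannot reach $(m,n)=(32,4),(48,4),\dots$. A consistency check: an $m$-independent criterion that worked at $n=4$ would also settle $(16,4)$, which the paper leaves open. For $n\ge5$ your route does close, for example with $\Phi=\|u\|^4-\|v\|^4$. There the $t=1$ condition reduces to $(n-\frac32)(a^6+b^6)\ge3a^2b^2(a^2+b^2)$, with $a=\|u\|$, $b=\|v\|$. Naming Lawlor's criterion as a fallback does not replace this computation.

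\paragraph{The missing idea.} The defining function must depend on $\|x\|$, not only on $P(x)$. The paper takes
\[
f=\|P\|^{1/2}\|x\|^2\,(s\circ P),\qquad s(u,v)=\|u\|^2-\|v\|^2 .
\]
The factor $\|x\|^2$ brings $m$ in through $\Delta\|x\|^2=2m$. The Clifford identities give $\|\nabla(s\circ P)\|^2=16\|P\|^2\|x\|^2$, $\Delta\|P\|^2=16n\|x\|^2$, $\Delta(s\circ P)=0$, and so on. With these, $\mathrm{div}(\nabla f/\|\nabla f\|)$ equals $(s\circ P)/\|\nabla f\|^3$ times a sum of monomials. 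Its coefficients include $12n-47$ and $2m-41$, and all of them are positive for $n\ge4$, $m\ge32$. This directly gives sub-calibrations of $\{f<-\epsilon\}$ and, using $-\nabla f$, of $\{f>\epsilon\}$.
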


\begin{remark}\label{rem: P-cone}
The cone $C^4_{m,n}$ defined in \cref{thm: minimal-Scone} is equal to $P^{-1}(S_n)$, the pullback of the Simons cone $S_n=\{(x,y)\in\mtr^n\oplus\mtr^n\mid \|x\|^2=\|y\|^2\}$ by a full umbillic quadratic harmonic morphism $P\colon\mtr^m\to\mtr^{2n}$. As such the mean curvature of $C^4_{m,n}$ vanishes at all points of $C^4_{m,n}\setminus P^{-1}(\{0\})$, regardless of the values of $(m,n)$.
\end{remark}
\begin{remark}\label{rem: area-min}
\begin{enumerate}
\item The singular set of $P^{-1}(S_n)$ is given by $P^{-1}(\{0\})$, which is $m-2n$ dimensional. Since the singular set of an area-minimising hypersurface of $\mtr^m$ is at most of dimension $m-8$ \cite{federer-70}, the condition $n\geq4$ in \cref{thm: minimal-Scone} is clearly a necessary condition for $P^{-1}(S_n)$ to be an area-minimiser.
\item There exists a Clifford system $A_1,...,A_{2n}\in M_{m\times m}(\mtr)$ if and only if $m=k\,\underline{m}(2n)$ for some $k\in\mtn$, here $\underline{m}(2n)$ is the number from \cref{table: clifford}. Since $\underline{m}(2\cdot4)=16$ and $\underline{m}(2n)\geq32$ if $n\geq5$, it follows that \cref{thm: minimal-Scone} exhausts all possibilities except $(m,n)=(16,4)$.
\end{enumerate}\end{remark}

The simplest way to prove \cref{thm: minimal-Scone} is to use the method of subcalibrations. This method was developed by De Philippis and Paolini \cite{philippis-paolini-09} to provide a simple proof that the Simons cone is area-minimising. For convenience of the reader we briefly review the relevant definitions and theorems in \cref{sec: subcalib} below. The proof then follows in \cref{sec: Scone-proof}.

\subsection{The method of subcalibrations}\label{sec: subcalib}

\begin{defn}\label{def: pp-09} Let $\Omega\subseteq\mtr^m$ be open and $E\subset\Omega$ be open.
\begin{enumerate}
\item The \emph{perimeter} of $E$ in $\Omega$ is
$$P(E,\Omega)=\sup\left\{\int_E \mathrm{div}(g)\ \middle|\ g\in C_c^1(\Omega\to\mtr^m), \sup_x\|g(x)\|\leq1\right\}$$
\item $E$ is \emph{minimal} if for all $A\subseteq\Omega$ bounded and open one has
$$P(E\cap A, \Omega\cap A)\leq P(F\cap A, \Omega\cap A)\quad\text{whenever $(F\cup E)\setminus (F\cap E)\Subset A$}.$$
\item $E$ is \emph{subminimal} if for all $A\subseteq\Omega$ bounded and open one has
$$P(E\cap A, \Omega\cap A)\leq P(F\cap A, \Omega\cap A)\quad\text{whenever $F\subseteq E$, $E\setminus F\Subset A$}.$$
\item Suppose $\partial E$ has $C^2$ regularity. A vectorfield $\xi:\Omega\to\mtr^m$ is called a \emph{subcalibration} of $E$ if
\begin{enumerate}[label=(\roman*)]
\item $\xi(x)$ is equal to the exterior normal of $\partial E$ for all $x\in\partial E$.
\item $\mathrm{div}(\xi)\,(x)\leq0$ for all $x\in E$.
\item $\|\xi(x)\|\leq1$ for all $x\in \Omega$.
\end{enumerate}
\end{enumerate}
\end{defn}

\begin{remark}
$E$ is subminimal if and only if $\partial E$ is area-minimising with respect to variations that go into the set $E$, i.e.\ inward variations.
\end{remark}

\begin{theorem}[\cite{philippis-paolini-09}]\label{thm: dpp09}Let $E\subset\Omega\subseteq\mtr^m$ be open.\begin{enumerate}
\item $E$ is minimal in the sense of Definition~\ref{def: pp-09} if and only if $\partial E$ is an area-minimiser in $\Omega$.
\item If $E$ and $\Omega\setminus\overline E$ are subminimal, then $E$ is minimal.
\item Suppose $\partial E$ has $C^2$ regularity and $E$ admits a subcalibration, then $E$ is subminimal.
\item Let $E_k\subset E$ be a sequence with $E_k\overset{L^1\subs{loc}}\longrightarrow E$, meaning for all bounded $A$ the measure of $A\cap(E\setminus E_k)$ converges to $0$. If $E_k$ are all subminimal then $E$ is subminimal.
\end{enumerate}
\end{theorem}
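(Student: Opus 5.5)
Since this theorem is quoted from \cite{philippis-paolini-09}, the plan is to reconstruct its four parts with standard tools from the theory of sets of finite perimeter. These are: the submodularity inequality $P(E\cap F,A)+P(E\cup F,A)\leq P(E,A)+P(F,A)$, lower semicontinuity of $P(\cdot,A)$ under $L^1_{\mathrm{loc}}$ convergence, and the Gauss–Green formula on reduced boundaries. I treat the parts in order and expect the regularity bookkeeping in part 3 to be the only real obstacle.

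\emph{Part 1} is De Giorgi's identification of perimeter with the $(m-1)$-dimensional measure of the reduced boundary, $P(E,A)=\mathcal H^{m-1}(\partial^*E\cap A)$. Under this identification, minimality of $E$ in the sense of Definition~\ref{def: pp-09} is exactly the statement that $\partial E$ minimises area among compactly supported competitors. I take this as definitional background.

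\emph{Part 2.} Take a competitor $F$ with $E\triangle F\Subset A$. The set $E\cap F\subseteq E$ differs from $E$ only inside $A$, so subminimality of $E$ gives $P(E,A)\leq P(E\cap F,A)$. Next, $\Omega\setminus(E\cup F)\subseteq\Omega\setminus\overline E$ up to null sets, and it differs from $\Omega\setminus\overline E$ only inside $A$. Since perimeter is invariant under complementation, subminimality of $\Omega\setminus\overline E$ gives $P(E,A)\leq P(E\cup F,A)$. Adding these two inequalities and applying submodularity yields $2P(E,A)\leq P(E,A)+P(F,A)$, which is the claim.

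\emph{Part 3.} Let $F\subseteq E$ with $E\setminus F\Subset A$, and apply the divergence theorem to $\xi$ on the finite-perimeter set $E\setminus F$. Its reduced boundary splits into two pieces:
\begin{enumerate}
\item the part of $\partial E$ not shared with $\partial^*F$, where the outer normal is $\nu_E=\xi$, so the flux equals the area;
\item the part of $\partial^*F$ lying inside $E$, where the outer normal is $-\nu_F$.
\end{enumerate}
On the shared parts of $\partial E$ and $\partial^* F$ the contributions agree. Using $\mathrm{div}\,\xi\leq0$ on $E$ we get
$$\mathcal H^{m-1}\bigl((\partial E\setminus\partial^*F)\cap A\bigr)\leq\int_{\partial^*F\cap E\cap A}\xi\cdot\nu_F\leq \mathcal H^{m-1}\bigl((\partial^*F\cap E)\cap A\bigr),$$
where the last step uses $\|\xi\|\leq1$. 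Adding the common part of $\partial E$ and $\partial^* F$ to both sides gives $P(E,A)\leq P(F,A)$. The main obstacle is justifying Gauss–Green when $\xi$ is only known to be $C^1$ up to the $C^2$ hypersurface $\partial E$ and $F$ is merely of finite perimeter. I would handle this by approximating $F$ by smooth sets $F_\varepsilon\subseteq E$ (mollify, then take superlevel sets), proving the inequality for $F_\varepsilon$, and passing to the limit using $P(F_\varepsilon,A)\to P(F,A)$.

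\emph{Part 4.} Let $F\subseteq E$ with $E\setminus F\Subset A$, and put $F_k=F\cap E_k\subseteq E_k$. Then $E_k\setminus F_k\subseteq E\setminus F\Subset A$, so subminimality of $E_k$ followed by submodularity gives
$$P(E_k,A)\leq P(F\cap E_k,A)\leq P(F,A)+P(E_k,A)-P(F\cup E_k,A),$$
that is, $P(F\cup E_k,A)\leq P(F,A)$. Since $E_k\subseteq F\cup E_k\subseteq E$ and $E_k\to E$ in $L^1_{\mathrm{loc}}$, we also have $F\cup E_k\to E$ in $L^1_{\mathrm{loc}}$. Lower semicontinuity then gives $P(E,A)\leq\liminf_k P(F\cup E_k,A)\leq P(F,A)$. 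To avoid any mass concentrating on $\partial A$, I first work with an open $A'\Subset A$ containing $E\setminus F$, chosen so that both $\partial A'$ and $\partial A$ carry no perimeter mass of $E$ and $F$. The remaining region $A\setminus A'$ is untouched, since there $E$ and $F$ coincide.
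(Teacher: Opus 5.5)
The paper does not prove this theorem; it quotes it from De Philippis--Paolini. Your arguments for Parts 2--4 are the standard ones behind the cited result: submodularity for Part 2, the divergence theorem on $E\setminus F$ for Part 3, and the competitor $F\cap E_k$ followed by submodularity and lower semicontinuity for Part 4. So the overall approach is right.

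A few implicit hypotheses should be made explicit. Cancelling $P(E,A)$ in Part 2 and $P(E_k,A)$ in Part 4 uses that these perimeters are finite, i.e.\ locally finite perimeter. In Part 2, identifying $P(\Omega\setminus\overline E,A)$ with $P(E,A)$ uses $|\partial E\cap A|=0$; the genuine competitor there is $(\Omega\setminus\overline E)\setminus F$. Both hypotheses hold in the paper's application. In Part 4 the auxiliary set $A'$ is not needed, since lower semicontinuity of $P(\cdot,A)$ on an open set $A$ is immediate from the supremum definition.

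The step to change is the regularity discussion in Part 3, because the obstacle you name is not there. De Giorgi's Gauss--Green formula $\int_G\mathrm{div}\,T=\int_{\partial^*G}T\cdot\nu_G\,d\mathcal H^{m-1}$ holds for every set $G$ of locally finite perimeter and every $T\in C^1_c$, with no smoothness of $G$ required. The subcalibration is defined on all of $\Omega$, and in the application it is smooth there. So choose $\chi\in C^1_c(A)$ with $\chi\equiv1$ near $\overline{E\setminus F}$ and apply the formula to $G=E\setminus F$, $T=\chi\xi$. Your decomposition of $\partial^*(E\setminus F)$ then finishes the proof, using that $\nu_F=\nu_E$ $\mathcal H^{m-1}$-a.e.\ on $\partial E\cap\partial^*F$, which follows from $F\subseteq E$. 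If $\xi$ were only $C^1$ on $\overline E$, you would extend it in a $C^1$ way across the $C^2$ hypersurface $\partial E$; its values off $\overline E$ never enter.

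The workaround you propose would in fact fail as written, for two reasons.
\begin{enumerate}
\item Smoothing $F$ does nothing about the regularity of $\xi$, which was the concern you named.
\item The sets $F_\varepsilon=\{\chi_F*\rho_\varepsilon>t\}$ are not admissible competitors. Wherever $F$ coincides with $E$ near $\partial E$, in particular near $\partial A$, the level sets of $\chi_F*\rho_\varepsilon$ do not in general coincide with $\partial E$. So $F_\varepsilon$ either leaves $E$ or detaches from $\partial E$ all the way out to $\partial A$; for curved $\partial E$ typically both happen, on different parts. In either case the divergence argument no longer yields $P(E,A)\le P(F_\varepsilon,A)$.
\end{enumerate}
Drop the approximation and argue directly.
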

\subsection{Proof of \cref{thm: minimal-Scone}}\label{sec: Scone-proof}

We proceed as follows, let $s\colon\mtr^{2n}\to\mtr$, $(x,y)\mapsto\|x\|^2-\|y\|^2$ so that $S_n=s^{-1}(\{0\})$ is the Simons cone. There is an umbillic quadratric harmonic morphism $P\colon\mtr^m\to\mtr^{2n}$ so that $C^4_{m,n}=(s\circ P)^{-1}(\{0\})$, as explained in \cref{rem: P-cone}. We now consider the function
$$f\colon\mtr^m\to\mtr^n,\qquad x\mapsto \|P(x)\|^{1/2}\,\|x\|^2\,(s\circ P)(x).$$
The first step is to show:
\begin{lemma}\label{lemma: subcal}
Let $m\geq32$, $n\geq4$. Then $\frac1{\|\nabla f\|}\nabla f$ is a sub-calibration of
$$\{ f<-\epsilon\}\coloneq\{x\in\mtr^m\mid f(x)<-\epsilon\}$$
in $\Omega=\mtr^m\setminus P^{-1}(\{0\})$.
\end{lemma}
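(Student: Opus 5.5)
The plan is to verify the three conditions of Definition~\ref{def: pp-09}(4) for $\xi=\nabla f/\|\nabla f\|$ directly. The boundary and norm conditions are cheap; the real work is the sign of the divergence.

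\emph{Regularity and boundary condition.} On $\Omega=\mtr^m\setminus P^{-1}(\{0\})$ the function $\|P\|^{1/2}$ is smooth, hence so is $f$. Moreover $f$ is homogeneous of degree $1+2+4=7$, so by Euler's identity $\langle x,\nabla f(x)\rangle=7f(x)$. On $\partial\{f<-\epsilon\}=\{f=-\epsilon\}$ this equals $-7\epsilon\neq0$, so $\nabla f\neq0$ there. Hence the boundary is a smooth hypersurface in $\Omega$, $\xi$ is well defined near it, and $\nabla f/\|\nabla f\|$ is the exterior normal because $f$ increases outward. Condition (iii) is trivial since $\|\xi\|=1$ wherever $\nabla f\neq0$. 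I would check via the same reduction below that $\nabla f\neq0$ on all of $\{f<-\epsilon\}$, or else note that the subcalibration is only needed on a neighbourhood in which this holds.

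\emph{Divergence.} Condition (ii) reads
$$\mathrm{div}\Bigl(\frac{\nabla f}{\|\nabla f\|}\Bigr)=\frac{1}{\|\nabla f\|^3}\Bigl(\|\nabla f\|^2\Delta f-\tfrac12\langle\nabla f,\nabla\|\nabla f\|^2\rangle\Bigr)\leq0\quad\text{on }\{f<-\epsilon\}.$$
To avoid Hessians of $P$, I would use that $P$ is a full umbilic quadratic harmonic morphism. By \cref{thm: deg2} with all weights equal, $\|\nabla P_i\|^2=c\|x\|^2$ for all $i$, and $\|P(x)\|\leq C\|x\|^2$ after normalising. Then for functions $h,h_1,h_2$ on $\mtr^{2n}$, with $\lambda^2=c\|x\|^2$ and $y=P(x)$:
\begin{itemize}
\item $\Delta(h\circ P)=\lambda^2(\Delta h)\circ P$ (harmonic morphism property, \cref{thm: hm-char});
\item $\langle\nabla(h_1\circ P),\nabla(h_2\circ P)\rangle=\lambda^2\langle\nabla h_1,\nabla h_2\rangle\circ P$ (horizontal conformality);
\item $\langle x,\nabla(h\circ P)\rangle=2\,(\langle y,\nabla h\rangle)\circ P$ (homogeneity);
\item $\nabla\|x\|^2=2x$ and $\Delta\|x\|^2=2m$.
\end{itemize}
Writing $f=G(P(x),\|x\|^2)$ with $G(y,t)=\|y\|^{1/2}\,t\,s(y)$, these rules express $\|\nabla f\|^2$ again as a function $F(P(x),\|x\|^2)$. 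Consequently $\langle\nabla f,\nabla F\rangle$ and $\Delta f$ are also explicit functions of $(y,t)$. The only dimension dependence enters through the term $\Delta\|x\|^2=2m$.

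\emph{Reduction to an inequality.} Everything is $O(n)\times O(n)$-invariant in $y=(y_1,y_2)$ and homogeneous. So the divergence inequality becomes an inequality in $a=\|y_1\|^2$ and $b=\|y_2\|^2$, after setting $t=\|x\|^2=1$. It must hold on the region $a<b$ (where $f<0$) with $a+b\leq C^2$, the constraint coming from $\|P\|\leq C\|x\|^2$, and the coefficients depend on $m$ and $n$.

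\emph{Main obstacle.} The hard step is verifying this two-variable inequality for $m\geq32$, $n\geq4$. I would first check it on the boundary pieces $a=b$ and $a+b=C^2$, and at the degenerate corner $a\to0$. Then I would show that the expression, after multiplying by a positive factor, is monotone in $m$, so that it suffices to treat $m=32$. After that I would try to write the resulting polynomial in $\sqrt{a+b}$ and $(b-a)/(a+b)$ as a sum of manifestly nonpositive terms. If this fails, I would adjust by a case split in the ratio $b/a$. I expect the exponent $1/2$ on $\|P\|$ and the factor $\|x\|^2$ in $f$ to have been chosen precisely so that this works at the threshold $m=32$.
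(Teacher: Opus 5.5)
Your reduction is the same as the paper's. Both write every term of (\ref{eq: div-f}) through $r=\|x\|^2$, $\rho=\|P(x)\|$, $\sigma=(s\circ P)(x)$, using conformality, the harmonic-morphism property and homogeneity, and then read off the sign. But the paper's proof \emph{is} that computation: it displays the numerator as $\sigma$ times five explicit monomials. Your proposal stops exactly there, and boundary checks, monotonicity in $m$, sums of squares and case splits are a plan rather than an argument. That is a genuine gap, and carrying the computation out shows it cannot be closed for $n=4$. With your rules (here $\lambda^2=4\|x\|^2$) I get
\begin{align*}
\Delta f&=\bigl((4n+5)r^2\rho^{-3/2}+(2m+20)\rho^{1/2}\bigr)\sigma,\qquad \|\nabla f\|^2=9r^3\rho^{-1}\sigma^2+24r\rho\,\sigma^2+16r^3\rho^3,\\
\|\nabla f\|^2\Delta f-\tfrac12\langle\nabla\|\nabla f\|^2,\nabla f\rangle&=\sigma\Bigl[\bigl((36n+54)r^5\rho^{-5/2}+(18m+96n-159)r^3\rho^{-1/2}+(48m+72)r\rho^{3/2}\bigr)\sigma^2\\
&\qquad+16(4n-19)r^5\rho^{3/2}+16(2m-31)r^3\rho^{7/2}\Bigr].
\end{align*}
In particular $\|\nabla f\|^2\ge16r^3\rho^3>0$ on $\Omega$, which settles your side remark about $\nabla f\neq0$. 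Multiplying the bracket by $\rho^3$ gives exactly the monomials of the paper's formula, but with different coefficients. Notably, $\|x\|^{10}\|P\|^{9/2}$ carries $16(4n-19)$, not $12n-47$.

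For $n\ge5$ and $m\ge16$ all five coefficients are positive, and the lemma follows at once; the binding condition is on $n$, not a threshold $m=32$. For $n=4$, take $\|x\|=1$, $\rho\to0$ and $\sigma=t\rho^2$ with $-\sqrt{8/33}<t<0$. The bracket equals $\rho^{3/2}\bigl(198t^2-48+O(\rho^2)\bigr)<0$, so the divergence is \emph{positive}. By homogeneity ($f$ has degree $7$, the divergence degree $-1$), such points lie in $\{f<-\epsilon\}\cap\Omega$ after rescaling. This is the corner $a+b\to0$ of your region, i.e.\ the approach to the singular set $P^{-1}(\{0\})$, which is not among the pieces you planned to check.

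An independent check confirms this. Near $x_0\in P^{-1}(\{0\})\setminus\{0\}$ the map $P$ is a submersion, so $f$ is modelled on $F(y)=\|y\|^{1/2}s(y)$ on $\mtr^{2n}$. Writing $(\|y_1\|,\|y_2\|)=(R\cos\theta,R\sin\theta)$, a direct computation gives
\[
\mathrm{div}\frac{\nabla F}{\|\nabla F\|}=\frac{\cos2\theta}{R\,W^3}\Bigl[\bigl(n-\tfrac52\bigr)W^2-9\sin^22\theta\Bigr],\qquad W^2=\tfrac{25}{4}\cos^22\theta+4\sin^22\theta,
\]
whose bracket equals $4n-19$ on the cone. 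Equivalently, $1/\|\nabla F\|\sim R^{-3/2}$ on $S_4$, whereas radial supersolutions $R^\gamma$ of the Jacobi operator of $S_4\subset\mtr^8$ require $\gamma\in[-3,-2]$.

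So for $n=4$ the inequality you set out to prove is false, and no verification strategy can close it. The paper's argument, whose displayed coefficients disagree with the computation above, does not establish this case either. A different power of $\|P\|$ in $f$ would be needed; transversally, the exponent must lie in $[1,2]$.
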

\begin{proof}
Clearly $\frac{\nabla f}{\|\nabla f\|}$ is equal to the exterior normal on $\partial\{f<-\epsilon\}=\{f=-\epsilon\}$ and $\|\frac{\nabla f}{\|\nabla f\|}\|\leq1$ everywhere. It remains to check that $\mathrm{div}(\frac1{\|\nabla f\|}\nabla f)\leq0$ on $\{f<-\epsilon\}$. To compute this we note first
\begin{equation}
\mathrm{div}(\frac1{\|\nabla f\|}\nabla f)= \frac{\|\nabla f\|^2\,\Delta(f)-\frac12\langle\nabla\|\nabla f\|^2,\nabla f\rangle}{\|\nabla f\|^3}.\label{eq: div-f}
\end{equation}

Recall that $P(x)=(\langle x,A_1x\rangle ,....,\langle x,A_{2n}x\rangle)$ for some Clifford system $A_1,...,A_{2n}$. We now claim:
\begin{align*}
&\|\nabla(s\circ P)\,(x)\|^2=16\|P(x)\|^2\,\|x\|^2,& &\left\|\nabla \|P\|^2(x)\right\|^2 = 16\|P(x)\|^2\,\|x\|^2, \\
&\langle \nabla(s\circ P)\,(x),\nabla\|P\|^2\,(x)\rangle= 16\|x\|^2(s\circ P)(x),& &\|\nabla\|x\|^2\,\|^2=4\|x\|^2,\\
&\langle \nabla(s\circ P)\,(x),\nabla\|x\|^2\rangle=8(s\circ P)(x),&  &\langle \nabla\|P\|^2\,(x),\nabla\|x\|^2\rangle =8\|P(x)\|^2,\\
&\Delta(s\circ P) \,(x)= 0,& &\Delta(\|P\|^2)\,(x)=8\|x\|^2\cdot 2n,\\
& \Delta(\|x\|^2)=2\,m.
\end{align*}
\textbf{Proof of claim.} First we remark that all relations involving scalar products with $\nabla\|x\|^2$ are obvious since $dF(\nabla\|x\|^2)=2\deg(F)\,F$ for any homogeneous function $F$.

Second one has:
\begin{align*}
\nabla\|P\|^2(x) &= \nabla\sum_{i=1}^{2n} P_i(x)^2 = 2\sum_{i=1}^{2n} P_i(x)\nabla P_i(x),\\
\nabla(s\circ P)&=\nabla\sum_{i=1}^n P_i(x)^2-\sum_{i=1}^nP_{i+n}(x)^2=2\sum_{i=1}^n P_i(x)\nabla P_i(x) - 2\sum_{i=1}^nP_{i+n}(x)\nabla P_{i+n}(x).
\end{align*}
recalling
$$\langle \nabla P_i(x),\nabla P_j(x)\rangle =4\langle x, A_i A_j x\rangle=2\langle x, (A_iA_j+A_jA_i)x\rangle = 4\|x\|^2\delta_{ij}$$
the remaining scalar products become easy calculations as well.

Since $s$ is harmonic and $P$ is a harmonic morphism $\Delta(s\circ P)=0$ is immediate. Additionally:
$$\Delta(\|P\|^2)(x)=\mathrm{div}\left(2\sum_{i=1}^{2n}P_i(x) \nabla P_i(x)\right) = 2\sum_{i=1}^{2n}\|\nabla P_i(x)\|^2+\sum_i P_i(x)\Delta(P_i)(x)=8\cdot (2n)\cdot \|x\|^2,$$
and the last remaining equation follows. This proves the claim.

With these computational rules one may calculate, either by hand or with a software, from equation~(\ref{eq: div-f}) that $\mathrm{div}(\frac1{\|\nabla f\|}\nabla f) $ is equal to
\begin{align*}
{\bigg(}(12&n-47)\|x\|^{10}\|P\|^{\frac92}+(\frac{33+198n}8)\|x\|^{10}\|P\|^{\frac 12}(s\circ P)^2+ (2m-41)\|x\|^6\|P\|^{\frac{13}2}\\
&+(\frac{384n+66m-681}{16})\|x\|^6\|P\|^{\frac52}(s\circ P)^2+(10+4m)\|x\|^2\|P\|^{\frac92}(s\circ P)^2{\bigg)}\cdot \frac{(s\circ P)}{\|\nabla f\|^3}.
\end{align*}
For $m\geq32$ and $n\geq4$ every summand in the large parenthesis is positive. On the other hand $(s\circ P)$ has the same sign as $f$ and so is negative on $\{f <-\epsilon\}$. This completes the proof of the proposition.
\end{proof}

\begin{lemma}\label{lemma: subminimal}
$\{f<-\epsilon\}$ is subminimal in $\mtr^m$.
\end{lemma}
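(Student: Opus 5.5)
The plan is to combine \cref{lemma: subcal} with point 3 of \cref{thm: dpp09}, and then remove the singular set $P^{-1}(\{0\})$, which is excluded from $\Omega$ in \cref{lemma: subcal}.

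\textbf{Regularity of $\partial\{f<-\epsilon\}$.} First we check that the boundary $\{f=-\epsilon\}$ is a $C^2$ hypersurface lying inside $\Omega=\mtr^m\setminus P^{-1}(\{0\})$.
\begin{enumerate}
\item On $P^{-1}(\{0\})$ we have $f=0$, so the level set $\{f=-\epsilon\}$ stays away from it.
\item Near that level set, $f$ is smooth, since $\|P\|^{1/2}$ is smooth where $P\neq0$.
\item Regularity of the level set follows from $\langle\nabla f,\nabla\|x\|^2\rangle=2\deg(f)\,f$. Here $f$ is homogeneous of degree $1+2+4=7$, so this quantity equals $-14\epsilon\neq0$ on $\{f=-\epsilon\}$. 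Hence $\nabla f\neq0$ there.
\end{enumerate}
For the same reason $\nabla f\neq 0$ wherever $f<-\epsilon$, so $\nabla f/\|\nabla f\|$ is smooth on $\{f<-\epsilon\}$. By \cref{lemma: subcal} this field has nonpositive divergence on $\{f<-\epsilon\}$.

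\textbf{Subminimality.} The subcalibration is only defined on $\Omega$. However, \cref{thm: dpp09}(3) only uses the field on $\overline{E}$ and on a neighbourhood of it. The set $\overline{\{f<-\epsilon\}}\subset\{f\le-\epsilon\}$ is disjoint from $P^{-1}(\{0\})$ and is closed in $\mtr^m$.

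I would therefore do the following.
\begin{enumerate}
\item Apply point 3 of \cref{thm: dpp09} with $\Omega$ as in \cref{lemma: subcal}. This shows that $\{f<-\epsilon\}$ is subminimal in $\Omega$.
\item Upgrade from $\Omega$ to $\mtr^m$. Take a competitor $F\subseteq E$ with $E\setminus F\Subset A$. The set $F$ lies in $E\subset\Omega$, and the perimeters of $E\cap A$ and $F\cap A$ are computed only near $\overline E$.
\item Since $\overline E$ has positive distance from $P^{-1}(\{0\})$ on bounded sets, the perimeter measured in $A$ coincides with that measured in $\Omega\cap A$. Alternatively, run the calibration argument directly: $\int_{E\setminus F}\mathrm{div}(\xi)\le0$, combined with the divergence theorem for sets of finite perimeter, gives $P(E,A)\le P(F,A)$. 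The field only needs to be $C^1$ on a neighbourhood of $\overline{E}\cap\overline A$, which holds.
\end{enumerate}

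\textbf{Main obstacle.} The delicate step is the passage from $\Omega$ to $\mtr^m$, that is, checking that the divergence-theorem argument of \cite{philippis-paolini-09} tolerates a vector field defined only on an open neighbourhood of $\overline{E}$.

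If this is troublesome, I would instead cut $\xi$ off. Choose a smooth extension $\tilde\xi$ with $\|\tilde\xi\|\le1$ that agrees with $\xi$ on $\{f<-\epsilon/2\}$. This is possible because $\{f\le-\epsilon\}$ and $P^{-1}(\{0\})\subset\{f=0\}$ are separated by the open set $\{-\epsilon<f<-\epsilon/2\}$ in a uniform, homogeneity-compatible way; for instance, multiply $\xi$ by a cutoff in the variable $f$. The divergence condition is only required on $E$, where $\tilde\xi=\xi$, so point 3 of \cref{thm: dpp09} applies directly in $\Omega=\mtr^m$.
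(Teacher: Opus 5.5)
Your argument is correct. Its first half coincides with the paper's: \cref{lemma: subcal} together with point~3 of \cref{thm: dpp09} gives subminimality in $\Omega=\mtr^m\setminus P^{-1}(\{0\})$.

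The passage to $\mtr^m$ is where you differ. The paper uses a removability fact: $P^{-1}(\{0\})$ is rectifiable of co-dimension $2n\geq2$, hence $\mathcal{H}^{m-1}$-negligible. So deleting it changes no perimeter, and subminimality in $\Omega$ and in $\mtr^m$ are equivalent. You instead observe that $f$ is continuous on $\mtr^m$ and vanishes on $P^{-1}(\{0\})$, so $\overline{\{f<-\epsilon\}}\subset\{f\leq-\epsilon\}$ never meets the singular set. From this you get two routes. Either the perimeter measures of $E$ and of every inward competitor $F\subseteq E$ are carried by $\overline E$ and do not see $P^{-1}(\{0\})$. Or, more cleanly, $\chi(f)\,\nabla f/\|\nabla f\|$, with a cutoff $\chi$ equal to $1$ on $(-\infty,-\epsilon/2]$ and $0$ near $0$, extends by zero to a smooth subcalibration on all of $\mtr^m$, so point~3 applies there directly.

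What each approach buys:
\begin{itemize}
\item Your route is more elementary: it needs no fact about removable sets for perimeter.
\item It also settles a point the paper leaves implicit, namely that $E\setminus F\Subset A$ forces $E\setminus F\Subset A\setminus P^{-1}(\{0\})$, because $\overline E$ avoids $P^{-1}(\{0\})$. In your first variant you should state this explicitly, since it is what lets you test subminimality in $\Omega$ on $A\cap\Omega$.
\item The paper's argument does not use the particular shape of $E$, only the size of the singular set.
\item Yours uses that the shift by $\epsilon>0$ keeps $E$ away from $P^{-1}(\{0\})$, in addition to making $\partial E$ smooth.
\end{itemize}
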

\begin{proof}
By \cref{lemma: subcal} $\{f<-\epsilon\}$ is subcalibrated, and hence subminimal, in $\mtr^m\setminus P^{-1}(\{0\})$. Since $P^{-1}(\{0\})$ is a rectifiable set of co-dimension $2n\geq 2$ it follows for any open sets $E\subset\Omega$ that
$$P(E,\Omega)=P(E\setminus P^{-1}(\{0\}), \Omega\setminus P^{-1}(\{0\})),$$
and hence $E$ is subminimal in $\Omega$ if and only if $E\setminus P^{-1}(\{0\})$ is subminimal in $\Omega\setminus P^{-1}(\{0\})$. 
\end{proof}

\begin{lemma}\label{lemma: L1}
As $\epsilon\to0$ one has $\{f<-\epsilon\}\overset{L^1\subs{loc}}\longrightarrow \{f<0\}$.
\end{lemma}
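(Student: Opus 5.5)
The claim is that $\mathbf 1_{\{f<-\epsilon\}}\to\mathbf 1_{\{f<0\}}$ in $L^1_{\mathrm{loc}}$. The sets $\{f<-\epsilon\}$ increase as $\epsilon\downarrow0$, and their union is exactly $\{f<0\}$: if $f(x)<0$, then $f(x)<-\epsilon$ as soon as $\epsilon<|f(x)|$. We also have $\{f<-\epsilon\}\subset\{f<0\}$. So for any bounded set $A$,
$$\big|A\cap(\{f<0\}\setminus\{f<-\epsilon\})\big| = \big|A\cap\{-\epsilon\le f<0\}\big|.$$
The sets $A\cap\{-\epsilon\le f<0\}$ decrease to the empty set as $\epsilon\to0$ and have finite measure. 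By continuity of Lebesgue measure from above (equivalently, monotone or dominated convergence applied to the indicator functions), this measure tends to $0$. That is precisely the $L^1_{\mathrm{loc}}$ convergence required in \cref{thm: dpp09}, point 4.

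For later use I would also record that $\{f<0\}$ differs from the region bounded by the cone only by a null set. Since $\|P(x)\|^{1/2}\|x\|^2\ge0$, we have $\{f<0\}=\{s\circ P<0\}\setminus P^{-1}(\{0\})$. Two facts make the difference negligible. First, $P^{-1}(\{0\})$ is a real algebraic set of codimension $2n\ge 8$. Second, $(s\circ P)^{-1}(\{0\})$ is the zero set of a nonzero polynomial. Both therefore have Lebesgue measure zero. Consequently $\{f<0\}$ agrees up to measure zero with the open set $E=\{s\circ P<0\}$ whose boundary is $C^4_{m,n}$, which is the set one ultimately wants to show is subminimal.

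The only step needing any care is this null-set remark, and it is standard: a nontrivial polynomial has a zero set of measure zero. The statement itself follows from monotone convergence alone, so I expect no real obstacle. One caveat is that $f$ involves $\|P\|^{1/2}$, which is only continuous at $P^{-1}(\{0\})$ and not smooth there; this does not matter, since the argument uses only measurability of $f$.
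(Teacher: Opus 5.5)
Your proof is correct and is the same argument as the paper's: the sets $\{f<-\epsilon\}$ increase as $\epsilon\downarrow0$ with union $\{f<0\}$, so continuity of measure on bounded sets gives the $L^1_{\mathrm{loc}}$ convergence. Your extra null-set remark is harmless but not needed. In fact $\{f<0\}=\{s\circ P<0\}$ holds exactly, because $s\circ P$ already vanishes on $P^{-1}(\{0\})$.
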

\begin{proof}
The sets $\{f<-\epsilon\}$ are monotone in $\epsilon$ and $\bigcup_\epsilon\{f<-\epsilon\}=\{f<0\}$. For any $A$ of finite volume one then has that $A\cap (\{f<0\}\setminus\{f<-\epsilon\})$ converges to $0$ by monotone continuity of measures.
\end{proof}

We now complete the proof:

\begin{proof}[Proof of \cref{thm: minimal-Scone}]
Combining \cref{lemma: subminimal,lemma: L1} we find that $\{f<0\}$ is subminimal. Just as in \cref{lemma: subcal} one proves that $\mathrm{div}(\frac{-\nabla f}{\|\nabla f\|})$ is a subcalibration of $\{f>\epsilon\}$, repeating the argument of \cref{lemma: subminimal,lemma: L1} then shows that $\{f>0\}$ is also subminimal. \cref{thm: minimal-Scone} then follows from point 2. of \cref{thm: dpp09}.
\end{proof}
\appendix
\section{Local existence of minimal submanifolds}\label{app: exist}

\begin{theorem}\label{thm: appendix}
Let $(M,g)$ be a Riemannian manifold, $p\in M$, and $V\subset T_xM$ a subvectorspace. Then there is a minimal submanifold $B\subset M$ with $x\in M$ and $T_xB=V$.
\end{theorem}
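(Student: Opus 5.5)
We will construct $B$ as a graph over a small disc in $V$ in normal coordinates, solving the minimal surface equation by the implicit function theorem on Banach spaces. Write $k=\dim V$ (the cases $k=0$ and $k=\dim M$ are trivial: take a point, or an open neighbourhood of $x$). Choose normal coordinates $\exp_x\colon T_xM\supset U\to M$ and split $T_xM=V\oplus V^\perp$. For $r>0$ small, rescale by $r$: set $g_r(y)\defeq r^{-2}(\exp_x^*g)(ry)$ on the unit ball. Then $g_r\to g_0$, the flat metric, in $C^{2,\alpha}$ as $r\to0$, and we may regard $r\in(-r_0,r_0)$ as a parameter on which $g_r$ depends smoothly (using $g_r$ as a smooth family including $r\le 0$ via the Taylor expansion of the metric in normal coordinates).

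Let $D\subset V$ be the closed unit disc. For $u\in C^{2,\alpha}(D,V^\perp)$ with $u|_{\partial D}=0$, consider the graph $\Gamma_u=\{(z,u(z))\}$ and let $\mathcal H(r,u)\in C^{0,\alpha}(D,V^\perp)$ be the mean curvature vector of $\Gamma_u$ with respect to $g_r$, projected onto the $V^\perp$-directions and pulled back to $D$. This map is smooth between the Banach spaces
$$(-r_0,r_0)\times C^{2,\alpha}_0(D,V^\perp)\to C^{0,\alpha}(D,V^\perp).$$
We have $\mathcal H(0,0)=0$, since a flat plane is minimal in euclidean space. The linearisation in $u$ at $(0,0)$ is the componentwise Euclidean Laplacian $\Delta_D$ acting on $V^\perp$-valued functions with Dirichlet conditions. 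By Schauder theory this is an isomorphism $C^{2,\alpha}_0\to C^{0,\alpha}$.

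By the implicit function theorem, for all small $r$ there is $u_r$ with $\mathcal H(r,u_r)=0$ and $u_r\to0$ in $C^{2,\alpha}$. Thus $\Gamma_{u_r}$ is $g_r$-minimal, and scaling back by $r$ and applying $\exp_x$ yields a minimal submanifold of $(M,g)$ near $x$.

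The main obstacle is that this graph need not pass through $x$, nor need its tangent space there equal $V$. We fix this by adding finite-dimensional parameters $(a,L)\in V^\perp\times\mathrm{Hom}(V,V^\perp)$ and seeking graphs of $a+L(z)+u(z)$ with $u|_{\partial D}=0$. The same implicit function argument gives a solution $u_{r,a,L}$ depending smoothly on $(r,a,L)$, with $u_{0,a,L}=0$, because affine planes are minimal for $g_0$. Next consider the map
$$(a,L)\mapsto\bigl(a+u(0),\;L+du(0)\bigr).$$
At $r=0$ this is the identity, so for small $r$ it is still a local diffeomorphism near $0$ (again by the inverse function theorem, with smooth dependence on $r$). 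We can therefore choose $(a,L)$ so that the graph passes through $0$ with tangent space $V$.

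One point needs care: the equation must be genuinely normal. The mean curvature vector of the graph is expressed through its $V^\perp$-components in graph coordinates. This is legitimate because for a $C^1$-small graph the normal space projects isomorphically onto $V^\perp$. The operator is then quasilinear elliptic, with symbol given by the induced metric, so smoothness of $\mathcal H$ in the Hölder spaces is standard.
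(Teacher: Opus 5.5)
Your proof is correct. It differs from the paper's in how the conditions at $x$ (passing through $x$, tangent space $V$) are imposed.

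\textbf{The paper's route.} It builds these conditions into the function space. It uses synchronous coordinates and the rescaled metrics $g_\epsilon(y)=g(\epsilon y)$, which are the same family as your $g_r$. It works on
$$E_1=\{F\in C^{\ell,\alpha}: F(0)=0,\ dF(0)=0,\ d^2F(0)=0\},\qquad E_2=\{F\in C^{\ell-2,\alpha}: F(0)=0\}.$$
The synchronous coordinates, together with $d^2F(0)=0$, make the second fundamental form vanish at the origin, so that $H(\epsilon,\cdot)$ lands in $E_2$. The Laplacian $E_1\to E_2$ is then surjective but not injective: its kernel consists of harmonic maps vanishing to second order at $0$. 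So the paper builds a right inverse from the Green's function, splits $E_1$ as kernel plus the image of that inverse, and applies the implicit function theorem once.

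\textbf{Your route.} You impose Dirichlet conditions, so the linearisation is a genuine isomorphism $C^{2,\alpha}_0\to C^{0,\alpha}$ by standard Schauder theory. You then recover the point and tangent conditions by a finite-dimensional correction in the affine parameters $(a,L)$. This is valid because $u_{0,a,L}=0$ by local uniqueness, so $(a,L)\mapsto(a+u(0),L+du(0))$ has derivative $\mathrm{id}$ at $r=0$.

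\textbf{Comparison.} Your approach costs a second implicit-function step. In exchange, it avoids constructing a right inverse that preserves the vanishing conditions at $0$, and it works in any chart with $g_{ij}(0)=\delta_{ij}$; normal coordinates are not needed. The paper's approach is more economical. As a by-product, its $B$ has vanishing second fundamental form at $x$; yours does not control this, though the statement does not require it.

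\textbf{One small omission.} Your solution is only $C^{2,\alpha}$. As the paper does, you should invoke interior elliptic regularity for the minimal surface system to conclude that $B$ is smooth. That system is quasilinear with principal part $G^{ij}\partial_{ij}$ acting componentwise, up to an invertible matrix factor.
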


The proof is essentially identical to a proof given by Fuglede (Lemma~A.1.1 of the Appendix of \cite{baird-wood-book}), showing the existence of harmonic functions with prescribed derivatives any point in a Riemannian manifold. While the mean curvature equation is non-linear and significantly more complicated than the Laplacian, both have the same linearisation. Local solutions can then be found by a zooming-in procedure and the Banach space implicit function theorem.

We take a coordinate chart of the form
$$B_1(0)_{m-k}\times\mtr^k\subset \mtr^{m-k}\times\mtr^k=\mtr^k$$
so that $p$ corresponds to the point $(0,0)$ and $V$ to the plane $\mtr^{m-k}\times\{0\}$. Without loss of generality one may assume that the coordinate frame is synchronous at $(0,0)$, i.e.\ the metric is of the form $g=g\subs{euc}+O(\|x\|^2)$.

\begin{defn}
\begin{enumerate}
\item Let $\alpha\in(0,1)$, $\ell\geq2$. We define
\begin{gather*}
E_1=\{ F\colon B_1(0)_{m-k}\to\mtr^k \in C^{\ell,\alpha}\mid F(0)=0, dF(0)=0, d^2F(0)=0\},\\
E_2=\{F\colon B_1(0)_{m-k}\to\mtr^k\in C^{\ell-2,\alpha}\mid F(0)=0\}.
\end{gather*}
\item For $\epsilon\in(-1,1)$, $x\in B_1(0)_{m-k}\times\mtr^k$ we let $g_\epsilon(x)\defeq g(\epsilon x)$.
\item For $F\in E_1$ we let $\Gamma(F)$ denote the graph $\{(x,F(x))\mid x\in B_1(0)_{m-k}\}$. For $\epsilon\in(-1,1)$ we let $H(\epsilon, F)\in E_2$ denote $m-k+1$,...,$m$ components of the mean curvature vector of $\Gamma(F)$ with respect to $g_\epsilon$.
\end{enumerate}
\end{defn}
\begin{remark}
\begin{enumerate}
\item Since $F(0)=0$, $dF(0)=0$ for all $F\in E_1$ one has $(0,0)\in\Gamma(F)$ and $T_{(0,0)}\Gamma(F)=\mtr^{m-k}\times\{0\}$. The condition $d^2F(0)=0$ further implies that the second fundamental form of $\Gamma(F)$ vanishes at $(0,0)$.
\item $E_i$ are Banach spaces when equipped with the $C^{\ell,\alpha}$ resp. $C^{\ell-2,\alpha}$ norms coming from the euclidean metric of $B_1(0)_{m-k}$. 
\item For $\epsilon\neq0$ one has that $(B_1(0)_{m-k}\times\mtr^k,g_\epsilon)$ is isometric to $(B_{|\epsilon|}(0)_{m-k}\times\mtr^k,\frac{g}{\epsilon^2})$, and thus has the same minimal submanifolds as $g\lvert_{B_{|\epsilon|}(0)_{m-k}\times\mtr^k}$. $g_0$ is the euclidean metric.
\item $H\colon(-1,1)\times E_1\to E_2$ is continuously differentiable, since $H(\epsilon, F)\,(x)$ is given by a (complicated) expression depending smoothly on the arguments $F(x)$, $dF(x)$, $d^2F(x)$, and $g_\epsilon(x, F(x)) = g(\epsilon x, \epsilon F(x))$, $dg_\epsilon$, $d^2g_\epsilon$.
\end{enumerate}
\end{remark}

Note that
\begin{enumerate}
\item $H(\epsilon,F)=0$ if and only if the graph $\Gamma(F)$ is minimal with respect to $g_\epsilon$.
\item $H(0,\mathbf0)=0$ for the constant function $\mathbf0$.
\item More generally if $H(\epsilon,F)^\alpha$ denotes the $\alpha$-component of $H(\epsilon,F)$ for $\alpha\in\{1,..,k\}$ one has
$$H(0,F)^\alpha=\sum_{ij=1}^{m-k} L^{ij}\partial_{ij} F^\alpha-\sum_{abij=1}^{m-k}\sum_{\beta=1}^kL^{ij} \partial_{ij}F^{\beta} F^\beta_{a} L^{ab}F^\alpha_b,$$
here $L=(\mathds1_{(m-k)\times(m-k)}+dF^T \cdot dF)^{-1}$ is the first fundamental form of $\Gamma(F)$. The complicated second summand arises from the fact that the mean curvature arises from taking the projection of $\sum_{ij}L^{ij}\partial_{ij}F^\alpha\, e_\alpha$ to the normal bundle of $\Gamma(F)$, here $e_\alpha$ denotes the standard basis of $\mtr^k$.
\item Denoting with $D_2H(\epsilon,F)\,K$ the Gateux-derivative of $H$ at the point $(\epsilon,F)$ in the direction $K\in E_1$, one then finds:
$$D_2H(0,\mathbf0)\,K=\sum_{i=1}^{m-k}\partial_{ii} K.$$
\end{enumerate}

Using the Green's function of the Laplacian of the unit ball $B_1(0)_{m-k}$ one can construct a right-inverse for the Laplacian $E_1\to E_2$, $K\mapsto \sum_{i=1}^{m-k}\partial_{ii}K$. See e.g.\ the proof of Lemma~A.1.1 in \cite{baird-wood-book} for an explicit formula. This is where the spaces $C^{\ell,\alpha}$, rather than just $C^\ell$, are required.

$E_1$ then splits as a direct sum of the kernel of the Laplacian and the image of the right-inverse, which is also a Banach space. The Banach space implicit function theorem shows the equation $H(\epsilon, \cdot)=0$ admits solutions for $\epsilon$ in some neighbourhood of $0$. These solutions are then minimal $C^{\ell,\alpha}$-submanifolds of $(B_{|\epsilon|}(0)_{m-k}\times\mtr^k, g)\subset (M,g)$. Since the Riemannian metric is smooth these manifolds are also smooth by elliptic regularity, cf.\ Theorem 4.4 of Chapter 14 in \cite{taylor-3}. This completes the proof of \cref{thm: appendix}.\qed

\bibliographystyle{alpha}
\bibliography{../my}

\providecommand{\MR}[1]{}
\begin{thebibliography}{BDGG69}

\bibitem[ABB99]{ABB-99}
Rachel Ababou, Paul Baird, and Jean Brossard.
\newblock Polyn\^{o}mes semi-conformes et morphismes harmoniques.
\newblock {\em Math. Z.}, 231(3):589--604, 1999.

\bibitem[ABBR15]{abbr-15}
Omid Amini, Matthew Baker, Erwan Brugall\'e, and Joseph Rabinoff.
\newblock Lifting harmonic morphisms {I}: metrized complexes and {B}erkovich
  skeleta.
\newblock {\em Res. Math. Sci.}, 2:Art. 7, 67, 2015.

\bibitem[ABS64]{atiyah-bott-shapiro-64}
M.~F. Atiyah, R.~Bott, and A.~Shapiro.
\newblock Clifford modules.
\newblock {\em Topology}, 3(suppl):3--38, 1964.

\bibitem[Bai83]{baird-83}
Paul Baird.
\newblock {\em Harmonic maps with symmetry, harmonic morphisms and deformations
  of metrics}, volume~87 of {\em Research Notes in Mathematics}.
\newblock Pitman (Advanced Publishing Program), Boston, MA, 1983.

\bibitem[BCD79]{bcd-79}
Alain Bernard, Eddy~A. Campbell, and Alexander~M. Davie.
\newblock Brownian motion and generalized analytic and inner functions.
\newblock {\em Ann. Inst. Fourier (Grenoble)}, 29(1):xvi, 207--228, 1979.

\bibitem[BDGG69]{bgg-69}
E.~Bombieri, E.~De~Giorgi, and E.~Giusti.
\newblock Minimal cones and the {B}ernstein problem.
\newblock {\em Invent. Math.}, 7:243--268, 1969.

\bibitem[BE81]{baird-eells-80}
Paul Baird and James Eells.
\newblock A conservation law for harmonic maps.
\newblock In {\em Geometry {S}ymposium, {U}trecht 1980 ({U}trecht, 1980)},
  volume 894 of {\em Lecture Notes in Math.}, pages 1--25. Springer, Berlin-New
  York, 1981.

\bibitem[Bes08]{besse-einstein}
Arthur~L. Besse.
\newblock {\em Einstein manifolds}.
\newblock Classics in Mathematics. Springer-Verlag, Berlin, 2008.
\newblock Reprint of the 1987 edition.

\bibitem[BG92]{baird-gudmundsson-92}
Paul Baird and Sigmundur Gudmundsson.
\newblock {$p$}-harmonic maps and minimal submanifolds.
\newblock {\em Math. Ann.}, 294(4):611--624, 1992.

\bibitem[BW03]{baird-wood-book}
Paul Baird and John~C. Wood.
\newblock {\em {H}armonic {M}orphisms between {R}iemannian {M}anifolds},
  volume~29 of {\em London Mathematical Society Monographs. New Series}.
\newblock The Clarendon Press, Oxford University Press, Oxford, 2003.

\bibitem[CC65]{const-corn-65}
Corneliu Constantinescu and Aurel Cornea.
\newblock Compactifications of harmonic spaces.
\newblock {\em Nagoya Math. J.}, 25:1--57, 1965.

\bibitem[CFO90]{cfb-90}
Laszlo Csink, P.~J. Fitzsimmons, and Bernt \O{}ksendal.
\newblock A stochastic characterization of harmonic morphisms.
\newblock {\em Math. Ann.}, 287(1):1--18, 1990.

\bibitem[Cui26]{cui-26}
Hongbin Cui.
\newblock On {FKM} isoparametric hypersurfaces in
  $\mathbb{S}^n\times\mathbb{S}^n$ and new area-minimizing cones.
\newblock Preprint, {arXiv}:2510.14650 [math.{DG}] (2026), 2026.

\bibitem[DPP09]{philippis-paolini-09}
Guido De~Philippis and Emanuele Paolini.
\newblock A short proof of the minimality of {S}imons cone.
\newblock {\em Rend. Semin. Mat. Univ. Padova}, 121:233--241, 2009.

\bibitem[EF01]{eells-fuglede-01}
J.~Eells and B.~Fuglede.
\newblock {\em Harmonic maps between {R}iemannian polyhedra}, volume 142 of
  {\em Cambridge Tracts in Mathematics}.
\newblock Cambridge University Press, Cambridge, 2001.
\newblock With a preface by M. Gromov.

\bibitem[EY95]{eells-yiu-95}
James Eells and Paul Yiu.
\newblock Polynomial harmonic morphisms between {E}uclidean spheres.
\newblock {\em Proc. Amer. Math. Soc.}, 123(9):2921--2925, 1995.

\bibitem[Fed70]{federer-70}
Herbert Federer.
\newblock The singular sets of area minimizing rectifiable currents with
  codimension one and of area minimizing flat chains modulo two with arbitrary
  codimension.
\newblock {\em Bull. Amer. Math. Soc.}, 76:767--771, 1970.

\bibitem[FKM81]{FKM}
Dirk Ferus, Hermann Karcher, and Hans~Friedrich M\"{u}nzner.
\newblock Cliffordalgebren und neue isoparametrische {H}yperfl\"{a}chen.
\newblock {\em Math. Z.}, 177(4):479--502, 1981.

\bibitem[Fug78]{fuglede-78}
Bent Fuglede.
\newblock Harmonic morphisms between {R}iemannian manifolds.
\newblock {\em Ann. Inst. Fourier (Grenoble)}, 28(2):vi, 107--144, 1978.

\bibitem[Gud]{hm-bib}
Sigmundur Gudmundsson.
\newblock The {B}ibliography of {H}armonic {M}orphisms.
\newblock
  \url{https://www.maths.lth.se/matematiklu/personal/sigma/harmonic/bibliography.html}.

\bibitem[Gud92]{gud-thesis}
Sigmundur Gudmundsson.
\newblock {\em The geometry of harmonic morphisms}.
\newblock PhD thesis, University of Leeds (Department of Pure Mathematics),
  1992.

\bibitem[HL71]{hsiang-lawson-71}
Wu-yi Hsiang and H.~Blaine Lawson, Jr.
\newblock Minimal submanifolds of low cohomogeneity.
\newblock {\em J. Differential Geometry}, 5:1--38, 1971.

\bibitem[Ish79]{ishihara-79}
T\^{o}ru Ishihara.
\newblock A mapping of {R}iemannian manifolds which preserves harmonic
  functions.
\newblock {\em J. Math. Kyoto Univ.}, 19(2):215--229, 1979.

\bibitem[JXC24]{jxc-24}
Xiaoxiang Jiao, Jialin Xin, and Hongbin Cui.
\newblock Area-minimizing cones over {S}tiefel manifolds.
\newblock {\em Adv. Math. (China)}, 53(5):929--952, 2024.

\bibitem[Law91]{lawlor-91}
Gary~R. Lawlor.
\newblock A sufficient criterion for a cone to be area-minimizing.
\newblock {\em Mem. Amer. Math. Soc.}, 91(446):vi+111, 1991.

\bibitem[LP06]{loubeau-pantilie-06}
Eric Loubeau and Radu Pantilie.
\newblock Harmonic morphisms between {W}eyl spaces and twistorial maps.
\newblock {\em Comm. Anal. Geom.}, 14(5):847--881, 2006.

\bibitem[LP10]{loubeau-pantilie-10}
Eric Loubeau and Radu Pantilie.
\newblock Harmonic morphisms between {W}eyl spaces and twistorial maps {II}.
\newblock {\em Ann. Inst. Fourier (Grenoble)}, 60(2):433--453, 2010.

\bibitem[Ou97]{ou-97}
Ye-Lin Ou.
\newblock Quadratic harmonic morphisms and {O}-systems.
\newblock {\em Ann. Inst. Fourier (Grenoble)}, 47(2):687--713, 1997.

\bibitem[OW96]{ou-wood-96}
Ye-Lin Ou and John~C. Wood.
\newblock On the classification of quadratic harmonic morphisms between
  {E}uclidean spaces.
\newblock {\em Algebras Groups Geom.}, 13(1):41--53, 1996.

\bibitem[Pal79]{palais-79}
Richard~S. Palais.
\newblock The principle of symmetric criticality.
\newblock {\em Comm. Math. Phys.}, 69(1):19--30, 1979.

\bibitem[PT86]{palais-terng-86}
Richard~S. Palais and Chuu-Lian Terng.
\newblock Reduction of variables for minimal submanifolds.
\newblock {\em Proc. Amer. Math. Soc.}, 98(3):480--484, 1986.

\bibitem[Rie25]{riedler-polynomials-23}
Oskar Riedler.
\newblock Polynomial harmonic morphisms and eigenfamilies on spheres.
\newblock {\em Math. Z.}, 311(3):Paper No. 56, 25, 2025.

\bibitem[Tay23]{taylor-3}
Michael~E. Taylor.
\newblock {\em Partial differential equations {III}. {N}onlinear equations},
  volume 117 of {\em Applied Mathematical Sciences}.
\newblock Springer, Cham, third edition, 2023.

\bibitem[Ura00]{urakawa-00}
Hajime Urakawa.
\newblock A discrete analogue of the harmonic morphism and {G}reen kernel
  comparison theorems.
\newblock {\em Glasg. Math. J.}, 42(3):319--334, 2000.

\bibitem[Wan94]{wang-minimal}
Qi~Ming Wang.
\newblock On a class of minimal hypersurfaces in $\mathbb{R}^n$.
\newblock {\em Math. Ann.}, 298(2):207--251, 1994.

\end{thebibliography}
\end{document}